%
%
%
%
%
\RequirePackage{fix-cm}
\documentclass[smallextended]{svjour3}       
\smartqed  

\usepackage{appendix, amsmath,latexsym,mathrsfs,amsfonts,epsfig,color,graphicx,}

\renewcommand{\a}{\alpha}

\renewcommand{\l}{\left}
\renewcommand{\r}{\right}

\newcommand{\e}{\epsilon}
\newcommand{\eps}{\varepsilon}

%
%
%

\begin{document}

\title{Effective reduction for a nonlocal Zakai stochastic partial differential equation in data assimilation\thanks{This work was partly supported by the National Natural Science Foundation of China (NSFC)$11771161$ and $11771449.$}
}

\titlerunning{Effective reduction for a nonlocal Zakai equation}        

\author{Li Lin \textsuperscript{1} \and
        Jinqiao Duan \textsuperscript{2} \and Meihua Yang\textsuperscript{3}}
\authorrunning{Li Lin et al.} 

\institute{$^{1}$ School of Mathematics and Statistics and Center for Mathematical Sciences, Huazhong University of Science and Technology,Wuhan, 430074, China. \\
              \email{linli@hust.edu.cn}           
           \and\\
           $^{2}$ Department of Applied Mathematics, Illinois Institute of Technology, Chicago, IL 60616, USA.
              \email{duan@iit.edu.}
              \and\\
              $^{3}$ Corresponding author. School of Mathematics and Statistics and Center for Mathematical Sciences, Huazhong University of Science and Technology,Wuhan, 430074, China.
              \email{yangmeih@hust.edu.cn}
}

\date{Received: date / Accepted: date}

\maketitle

\begin{abstract}
We study the effective reduction for a nonlocal stochastic partial differential equation with oscillating coefficients.  The nonlocal operator in this stochastic partial differential equation is  the generator of  non-Gaussian L\'{e}vy processes, with either \textbf{integrable} or \textbf{non-integrable} jump kernels. We examine  the limiting behavior of this equation as  a scaling parameter tends to zero,  and derive a reduced (local or nonlocal)  effective  equation. In particular, this work leads to an effective  reduction for a data assimilation system with  L\'{e}vy noise, by examining the corresponding nonlocal Zakai stochastic partial differential equation.  We show that the probability density for the reduced data assimilation system approximates that for the original system.
\keywords{Data assimilation \and Zakai equation \and Stochastic partial differential equations \and Localization}
\subclass{60H15 \and 35B27 \and 80M40 \and 26A33}
\end{abstract}

\section{Introduction}\label{s1}
Data assimilation is a work to extract system state information with the help of observations. The state evolution and the observations are usually under random fluctuations. The basic thought is to obtain the best estimate for the true system state, in terms of the probability distribution for the system state, given only some noisy observations of the system. It has applications in various fields \cite{Bain,GE}. Some authors considered the effects of the multiscale signal and observation processes by studying the Zakai equation. Park \emph{et al}. \cite{park} show that the probability density of the original system converges to that of the effective system, by Fourier analysis method. Imkeller \emph{et al.} \cite{pim} show a convergence for the optimal filter, by using backward stochastic differential equations and asymptotic techniques.
However, random fluctuations are often non-Gaussian in nonlinear systems, for example, in geosciences \cite{dit}, and biosciences \cite{lin}. L\'{e}vy motions are appropriate models for a class of important non-Gaussian processes with jumps or bursts. It is desirable to consider data assimilation when the system evolution is under L\'{e}vy motions in the multiscale context. In this situation, the Zakai equation of the nonlinear system is a nonlocal stochastic differential equation.

Recently, there are also some papers about effective reduction of nonlocal problems. Mengesha \emph{et al.} \cite{meng} and Piatnitski \emph{et al.} \cite{31}
showed that in the topology of
resolvent convergence the family of rescaled operator converges to a second order elliptic operator with constant coefficients.
Schwab \cite{SRW} studied a family of nonlinear nonlocal equations  that include but are not limited to the control of pure jump processes. There are also some works about the effective reduction of nonlocal parabolic equations.  Bardi and Cesaroni \cite{BCA}
studied a nonlocal parabolic Hamilton-Jacobi equation with superlinear growth in the gradient terms. 
Acevessanchez and Cesbron \cite{AAAA3} provided the
rigorous derivation of the macroscopic limit of a Vlasov-Fokker-Planck equation in which the Laplacian is replaced by a fractional Laplacian.

However, there are few works   dealing with    effective reduction for nonlocal stochastic partial differential equations.
In this present paper, we prove two main theorems  about such nonlocal effective reduction, in Section \ref{s4} and  Section \ref{s5} respectively. In Section \ref{s4} , the nonlocal operator in stochastic partial differential equation is the generator of a L\'evy process with \textbf{integrable  jump kernel} near $0$ and the kernel also has  finite variance. In Section \ref{s5}, the nonlocal operator is the generator of an $\alpha$-stable L\'evy process, with  \textbf{non-integrable  jump kernel}. In Section \ref{s4}, we will show that   a \emph{nonlocal} stochastic partial differential equation has an effective system that is a    \emph{local}  stochastic partial differential equation. This further leads to the effective reduction for a nonlocal Zakai equation, for a   data assimilation system under non-Gaussian L\'evy fluctuations. This implies that the nonlocal Zakai equation can be effectively approximated by a local Zakai equation, with benefits for   simulation and analysis of a class of non-Gaussian data assimilation systems. In Section \ref{s5}, we show that a  stochastic partial differential equation with fractional Laplacian operator has an effective system that is also a fractional stochastic partial differential equation.
This paper is organized as follows. In Section \ref{s2}, we recall  some function spaces, and present assumptions  and two theorems. In Section \ref{s3}, we  discuss main ideas  for the  proofs  of the theorems, but  in a simplified setting.  Then, in Section \ref{s4}, we  prove Theorem \ref{1.1} and in Section \ref{s5}, we   prove Theorem \ref{27}.  Finally, in Section \ref{s6} we apply these two theorems   to the Zakai equations for data assimilation with two types of non-Gaussian  L\'evy fluctuations.
\section{Statements of Main Results}\label{s2}
Through this paper, we always identify functions on $\mathbb{T}$ with their periodic extension to $\mathbb{R}$. In this section, we first provide the simple statement of the problem. Then, we recall some function spaces and impose some assumptions on the coefficients. At last, we state two main theorems  about effective reduction. Theorem \ref{1.1} is for the integrable jump kernel  and Theorem \ref{27} is for the non-integrable jump kernel.

\subsection{Problem Statement}
In Section \ref{s4}, we consider the effective reduction  for the following  \emph{nonlocal} stochastic partial differential equation (\textbf{heterogeneous system}) with a small  positive scale parameter $\epsilon$:
\begin{equation}\label{1}
\begin{cases}
du^\epsilon(t,x)=A^\epsilon u^\epsilon (t,x) dt+B^\epsilon u^\epsilon(t,x)dt+\sigma\l(\frac{x}{\epsilon}\r)u^\epsilon(t,x)dW_t,\\
u^\epsilon(0,x)=u_0(x),
\end{cases}
\end{equation}
where the initial datum  $u_0$ is in $L^2(\mathbb{R})$,  $W=\l(W(t)\r)_{t\in[0,T]}$ is a one dimensional Brownian motion in a probability space $(\Omega,\mathcal{F},\mathbb{P})$, $A^\epsilon$ and $B^\epsilon$ are linear operators of the forms:
\begin{equation}\label{ee1}
\begin{split}
(A^\epsilon u)(x)=a\l(\frac{x}{\epsilon}\r)u''(x)+\frac{1}{\epsilon}b\l(\frac{x}{\epsilon}\r)u'(x),\\
(B^\epsilon u)(x)=\frac{1}{\epsilon^3}\lambda\l(\frac{x}{\epsilon}\r)\int_{\mathbb{R}}
c\l(\frac{x-y}{\epsilon}\r)\Big(u(y)-u(x)\Big)dy.
\end{split}
\end{equation}
Here $a(\cdot), b(\cdot), \sigma(\cdot)$,  $\lambda(\cdot)$ are known functions of period $1$, $\lambda(\cdot)$ is also  bounded and positive, and $c(z)$ is the jump kernel
being a positive \textbf{integrable function} with symmetry property $c(z)=c(-z)$.
That is, $\int_{\mathbb{R}} c(\xi) d\xi <\infty$.
For convenience, we also  define the linear operator $T^\epsilon u=A^\epsilon u+B^\epsilon u$.

Our purpose is to examine   the convergence of  the solution  $u^\epsilon$  of (\ref{1}) in some probabilistic sense,  as $\epsilon\rightarrow0$,
and to specify the limit $u$. We will see that the limit process $u$ satisfies the following \textbf{local} stochastic partial differential equation (\textbf{effective system}):
\begin{equation}\label{2}
  \begin{cases}
    du(t,x)=T^0u(t,x)dt+M^0u(t,x)dW_t,\\
    u(0,x)=u_0(x),
  \end{cases}
\end{equation}
where
\begin{equation}\label{17}
(T^0u)(x)=Q u''(x),\;\;\;\;\;(M^0u)(x)=u(x)\int_{\mathbb{T}}\sigma(\eta)m(\eta)d\eta.
\end{equation}
Here the coefficient $Q$ is determined by
\begin{equation}\label{14}
\begin{split}
  Q&=\int_{\mathbb{T}}a(\eta)m(\eta)\Big(\chi'(\eta)+1\Big)^2d\eta
  +\frac{1}{2}\int_{\mathbb{T}}\int_{\mathbb{R}}c(\eta-q)\lambda(q)m(q)\Big[(\eta-q)\\
  &\quad+\Big(\chi (\eta)-\chi (q)\Big)\Big]^2d\eta dq,
\end{split}
\end{equation}
and the functions $\chi$ and $m$ are the unique solutions of the following \emph{deterministic} partial differential equations, respectively:
\begin{equation}\label{9465}
  \begin{cases}
    \tilde{T}\chi(\eta)+b(\eta)=0,\;\eta\in\mathbb{T},\\
    \int_0^1\chi(\eta)m(\eta)d\eta=0,
  \end{cases}
\end{equation}
and
\begin{equation}\label{24}
  \begin{cases}
   \tilde{T}^* m(\eta)=0,\;\eta\in\mathbb{T},\\
   \int_0^1m(\eta)d\eta=1,
 \end{cases}
\end{equation}
with the linear operator $\tilde{T}$ being defined by:
\[
\begin{split}
(\tilde{T}\upsilon)(\eta):&=a(\eta)\upsilon''(\eta)+b(\eta)\upsilon'(\eta)+\lambda(\eta)\int_{\mathbb{R}}c(x-\eta)\Big(\upsilon(x)-\upsilon(\eta)\Big)dx,\eta\in\mathbb{T}.
\end{split}
\]
Note that $\tilde{T}^*$ is the adjoint operator of $\tilde{T}$ in $L^2(\mathbb{R})$.
For convenience, we denote
$$(\tilde{A}\upsilon)(\eta)=a(\eta)\upsilon''(\eta)+b(\eta)\upsilon'(\eta),$$
$$(\tilde{B}\upsilon)(\eta)=\lambda(\eta)\int_{\mathbb{R}}c(x-\eta)\Big(\upsilon(x)-\upsilon(\eta)\Big)dx.$$

We will apply the proceeding  result to obtain the effective equation for the following \textbf{nonlocal Zakai equation}:
\begin{equation}\label{3}
\begin{cases}
$$du^\epsilon(t,x)=(T^\epsilon)^* u^\epsilon(t,x)dt+u^\epsilon(t,x)\sigma\l(\frac{x}{\epsilon}\r)^2dt+u^\epsilon(t,x)\sigma\l(\frac{x}{\epsilon}\r)dW_t,\\
u^\epsilon(0,x)=u_0(x).$$
\end{cases}
\end{equation}
This is the Zakai equation for the conditional probability density of the following \textbf{nonlinear data assimilation system} (Qiao and Duan  \cite{32})
\begin{equation}
\begin{cases}
  dx_t^\epsilon=\frac{1}{\epsilon}b\l(\frac{x_t}{\epsilon}\r)dt+\sigma_1\l(\frac{x_t}{\epsilon}\r)dw_t+dL^{\epsilon}_t,\\
  dy_t=\sigma\l(\frac{x_t}{\epsilon}\r)dt+dW_t,
\end{cases}
\end{equation}
where $x_t$ is system (or signal) state, $y_t$ is the observation,  and $w^{\epsilon}_t, W_t$ are  mutually independent Brownian motions.  Moreover,
$L^{\epsilon}_t$ is a L\'{e}vy process with the generator $B^\epsilon$ given in (\ref{ee1}).

\medskip

In Section \ref{s5}, we consider the effective reduction  for the following  \emph{nonlocal} stochastic partial differential equation (\textbf{heterogeneous system}) with a small  positive scale parameter $\epsilon$:
\begin{equation}\label{21}
\begin{cases}
$$d v^\epsilon(t,x)=F^\epsilon v^\epsilon (t,x) dt+L^\epsilon v^\epsilon(t,x)dt+\sigma\l(\frac{x}{\epsilon}\r)v^\epsilon(t,x)dW_t,\\
v^\epsilon(0,x)=v_0(x),$$
\end{cases}
\end{equation}
where the initial datum  $v_0$ is in $L^2(\mathbb{R})$,  $W=(W(t))_{t\in[0,T]}$ is a one dimensional Brownian motion in a probability space $(\Omega,\mathcal{F},\mathbb{P})$, and for $\a\in (1,2),$  we define
the linear operators:

$$(F^\epsilon u)(x)=g\l(\frac{x}{\epsilon}\r)u'(x)+f\l(\frac{x}
{\epsilon}\r)u(x),$$
$$(L^\epsilon u)(x)=\int_{\mathbb{R}\backslash \{0\}}\Big(u(x+\delta\l(\frac{x}{\epsilon}\r)y)-u(x)\Big)\nu^{\alpha}(dy)+\frac{1}{\epsilon^{\alpha-1}}p\l(\frac{x}{\epsilon}\r)u'(x),$$
where the integral is in the sense of Cauchy principle value.
Here $p(\cdot), g(\cdot),$ $f(\cdot), \delta(\cdot)$ are known functions of period $1$, $\delta(\cdot)$ is positive  and the jump measure $\nu^{\alpha}(dy)=|y|^{-(1+\a)}dy.$
Note that this \textbf{jump kernel is non-integrable}: $\int_{\mathbb{R}} |y|^{-(1+\a)}dy  = \infty$.
We denote $V^\epsilon u=F^\epsilon u+L^\epsilon u$.

Recall that the nonlocal or fractional Laplacian is defined as \cite{11,HuangQiao}
$$(-\Delta)^{\a/2}u(x)=\int_{\mathbb{R}\backslash \{0\}}\frac{u(x)-u(y)}{|y-x|^{1+\a}}dy,$$
where the integral is in the sense of Cauchy principal value.
Note that $-(-\Delta)^{\a/2}$ is the generator for a symmetric $\alpha$-stable L\'evy motion $L_t^\alpha$. Thus  $L^\epsilon u(x)=-\delta^{\a}\l(\frac{x}{\epsilon}\r)(-\Delta)^{\a/2}u(x)+\frac{1}{\epsilon^{\alpha-1}}p\l(\frac{x}{\epsilon}\r)u'(x).$
Denoting $\delta_1(x)=\delta^\alpha(x)$.\\

\begin{remark}
  We introduce the nonlocal gradient operator and nonlocal divergence operator. Given the mapping $\beta(x,y),\gamma(x,y): \mathbb{R}\times\mathbb{R}\rightarrow\mathbb{R}$ with $\gamma$ antisymmetric, the action of the nonlocal divergence operator $\mathcal{D}$ on $\beta$ is defined as
$$\mathcal{D}(\beta)(x):=\int_{\mathbb{R}}(\beta(x,y)+\beta(y,x))\cdot\gamma(x,y)dy  \qquad \text{for}\; x\in \mathbb{R}.$$
Given the mapping $u(x):\mathbb{R}\rightarrow\mathbb{R},$ the adjoint operator $\mathcal{D^{*}}$ corresponding to $\mathcal{D}$ is the nonlocal gradient operator whose action on $u$ is given by
$$\mathcal{D^{*}}(u)(x,y)=-(u(y)-u(x))\gamma(x,y) \qquad \text{for}\;  x,y\in \mathbb{R}.$$
Here we take $\gamma(x,y)=(y-x)\frac{1}{|y-x|^{\frac{3+\a}{2}}}.$ Then
$$\mathcal{D}\mathcal{D^{*}}=2(-\Delta)^{\a/2}.$$
\end{remark}

We will examine   the convergence of  the solution  $v^\epsilon$  of (\ref{21}) in some probabilistic sense,  as $\epsilon\rightarrow0$,
and to specify the limit $v$.
We will see that the limit process $v$ satisfies the following \textbf{nonlocal} stochastic partial differential equation (\textbf{effective system}):
\begin{equation}\label{22}
  \begin{cases}
    dv(t,x)=V^0v(t,x)dt+M^0v(t,x)dW_t,\\
    v(0,x)=v_0(x),
  \end{cases}
\end{equation}
where
$$(M^0u)(x)=u(x)\int_{\mathbb{T}}\sigma(\eta)m_1(\eta)d\eta,$$
\[
\begin{split}
(V^0u)(x)&=\int_\mathbb{T}\delta^\alpha(\eta)m_1(\eta)d\eta\cdot(-(-\Delta)^{\alpha/2})u(x)+u'(x)\int_{\mathbb{T}}g(\eta)m_1(\eta)d\eta\\
&\quad+u(x)\int_{\mathbb{T}}f(\eta)m_1(\eta)d\eta.
\end{split}
\]

Here the function $m_1$ is the unique solution of the following \emph{deterministic} partial differential equation, respectively:
\begin{equation}\label{25}
  \begin{cases}
   \tilde{L}^* m_1(\eta)=0,\;\eta\in\mathbb{T},\\
   \int_0^1m_1(\eta)d\eta=1,
 \end{cases}
\end{equation}
with the linear operator $\tilde{L}$ being defined by:
\[
\begin{split}
(\tilde{L}u)(\eta):&=-\delta^{\a}(\eta)(-\Delta)^{\a/2}u(\eta)+p(\eta)u'(\eta),\eta\in\mathbb{T}.
\end{split}
\]
Note that $\tilde{L}^*$ is the adjoint operator of $\tilde{L}$ in $L^2(\mathbb{R})$.

We will also apply the result to obtain the effective equation for the following \textbf{nonlocal Zakai equation}:
\begin{small}
\begin{equation}
\begin{cases}
$$dv^\epsilon(t,x)=(L^\epsilon)^* v^\epsilon(t,x)dt+v^\epsilon(t,x)\sigma\l(\frac{x}{\epsilon}\r)^2dt+v^\epsilon(t,x)\sigma\l(\frac{x}{\epsilon}\r)dW_t,\\
v^\epsilon(0,x)=v_0(x).$$
\end{cases}
\end{equation}
\end{small}
This is the Zakai equation for the conditional probability density of the following \textbf{nonlinear data assimilation system}
\begin{equation}\label{nn}
\begin{cases}
  dx_t^\epsilon=\frac{1}{\epsilon}p\l(\frac{x_t}{\epsilon}\r)dt+\delta{\l(\frac{x_t}{\epsilon}\r)}dL^\alpha_t,\\
  dy_t=\sigma(\frac{x_t}{\epsilon})dt+dW_t,
\end{cases}
\end{equation}
where $x_t$ is system (or signal) state, $y_t$ is the observation,  and $W_t$ is a one dimensional Brownian motion.  Moreover,
$L^\alpha_t$ is a $\alpha$-stable L\'{e}vy process.

\medskip


The major difference between (\ref{1}) and (\ref{21}) is the different scaling in
the generator of the L\'evy process.
 In Section \ref{s4}, the operator in stochastic partial differential equation (\ref{1}) is the generator of the L\'evy process with special jump kernel which excludes the big jumps.  The scaling in the generator is  $\epsilon^{-3}$ . In this case, the operator in effective system (\ref{2}) is just a generator of Brownian motion. In Section \ref{s5}, the operator in stochastic partial differential equation (\ref{21}) is the generator of a multiplicative $\alpha$-stable L\'evy process which includes big jumps.
 There is no explicit term about the parameter $\epsilon$ in the fractional term.
  Then we can find the operator in effective system (\ref{22}) is also a generator of a $\alpha$-stable L\'evy process.
\subsection{Function Spaces and Assumptions}\label{ss1}
\subsubsection{Function Spaces on $\mathbb{R}$}
Let us set $H=L^2(\mathbb{R})$, the totality of square integrable functions on $\mathbb{R}$ with canonical inner product and norm
$$(u,v)=\int_{\mathbb{R}}u(x)v(x)dx, \;\left\|u\right\|^2_0=(u,u),\; u,v\in L^2(\mathbb{R}).$$
We denote by  $H^1=H^1(\mathbb{R})$, the usual Sobolev space of order $1,$ that is, the completion of $C_c^\infty(\mathbb{R}),$ the set of smooth functions with compact support, with respect to the norm $\left\|\cdot\right\|_1$
induced by the inner product
$$(u,v)_1=(u,v)+(u',v'),$$
where $f'$ stands for  $\frac{df}{dx}.$ We denote by $(H^1)'=H^{-1}$ the dual space of $H^1.$


We fix a smooth and positive function $\theta$ on $\mathbb{R}$ such that $\theta(x)=|x|$ for all $|x|\geq1$. Then for $n=0$ or $1,$ $\kappa\in[0,\infty)$, we define the weighted Sobolev space with norm $\left\|\cdot\right\|_{H^n_\kappa}$ :
$$H^n_\kappa=\{\upsilon|\upsilon e^{\kappa\theta}\in H^n_\kappa\},\qquad \left\|\upsilon\right\|_{H^n_\kappa}=\left\|\upsilon e^{\kappa\theta}\right\|_n.$$
Let $(H^n_\kappa)'$ be the dual space of $H^n_\kappa$. Then
$$(H^n_\kappa)'=H^{-n}_{-\kappa}=\{\upsilon|\upsilon e^{-\kappa\theta}\in H^{-n}\}.$$
We shall denote by $\left \langle , \right \rangle$ the duality product between $H^n_\kappa$ and $H^{-n}_{-\kappa}.$

We also denote $H_{-\kappa} = H_{-\kappa}^0$.
Introduce a function space
$$ K=C(0,T;H_{-\kappa}^{-1})\cap L^2(0,T;H_{-\kappa}).$$
In $C(0,T;H_{-\kappa}^{-1})$ we take the uniform convergence topology  $\mathcal{T}_1$,  and in $L^2(0,T;\\
H_{-\kappa})$ we choose the topology induced by $L^2$-norm and denote it by  $\mathcal{T}_2$.
\begin{remark}
  Two topologies $\mathcal{T}_1$ and $\mathcal{T}_2$ generate the same Borel $\sigma-$algebra on $K$(p.$74$ of \cite{MMM}).
\end{remark}
We recall some notations used in Section \ref{s5}. For any $\a\in (1,2)$, we define the usual fractional space $H^{\a/2}(\mathbb{R})$, that is, the completion of $C_c^\infty(\mathbb{R}),$  with respect to the following norm:
\begin{equation*}
\begin{split}
  \left\|u\right\|_{H^{\a/2}}^2&=\left\|u\right\|_0^2+\int_{\mathbb{R}}\int_{\mathbb{R}}\frac{|u(x)-u(y)|^2}{|x-y|^{1+\a}}dxdy\\
  &:=\left\|u\right\|_0^2+[u]^2_{H^{\alpha/2}}.
\end{split}
\end{equation*}
Then, we can do as before, for $\a\in(0,2)$, $\kappa\in[0,\infty)$, we define the weighted Sobolev spaces $H^{\a/2}_\kappa$, $H^{-\a/2}_{-\kappa}$ and $ K_1=C(0,T;H_{-\kappa}^{-\a/2})\cap L^2(0,T;H_{-\kappa}).$
\subsubsection{Assumptions}
Now, what we impose on the coefficients are the following conditions.\\
($\romannumeral 1$) The coefficients $a(\cdot), b(\cdot)\in C^3 (\mathbb{T}), \sigma(\cdot)\in C_b^3(\mathbb{T}), $ where $C_b^3$ stands for  the set of function of class\;$C^3$\;whose partial derivatives of order less than or equal to\;$3$\; are bounded.\\
($\romannumeral 2$) For every $\eta\in \mathbb{T}$, there exists $\kappa_1>0$\;such that\\
$$\kappa_1  \leq a(\eta)\leq\kappa_1^{-1}.$$
($\romannumeral 3$)  The function $\lambda(x)$ is periodic  and bounded (with bounds $C_1, C_2$ ): $0<C_1\leq\lambda(x)\leq C_2<\infty.$\\
($\romannumeral 4$)  The  kernel function $ c(z)\geq0; c(-z)=c(z)$, and $$\left\|c\right\|_{L^1({\mathbb{R})}}=\int_{\mathbb{R}}c(z)dz=a_1>0,
\int_{\mathbb{R}}\vert z\vert ^2 c(z)dz<\infty.$$
($\romannumeral 5$)  The function $b(\cdot)$ satisfies
$$\int_{\mathbb{T}}b(\eta)m(\eta)d\eta=0,$$
where $m$ is the solution of $(\ref{24})$.\\

In Section \ref{s5}, we  make the following  \textbf{assumptions}. \\
(a) The coefficients $p(\cdot), g(\cdot), f(\cdot), \delta(\cdot),\sigma(\cdot)\in C^2 (\mathbb{T}).$\\
(b) Function $p(\cdot)$ satisfies:
$$\int_{\mathbb{T}}p(\eta)m_1(\eta)d\eta=0,$$
where $m_1$ is the solution of (\ref{25}).

\subsection{Main Result}
\begin{theorem}\label{1.1}(Effective reduction for integrable jump kernel)
  Assume that assumptions $(\romannumeral 1)-(\romannumeral 5)$ hold. Let $u^\epsilon$ be the solution of  the heterogeneous equation (\ref{1}). We denote $\pi^{m,\epsilon},$ $\pi^\epsilon$ the laws of $m^\epsilon u^\epsilon$and $u^\epsilon$ respectively, where we have set $m^\epsilon(x)=m(\frac{x}{\epsilon})$, $(m^\epsilon u^\epsilon)(x)=m(\frac{x}{\epsilon})u^\epsilon(x),$ and $m$ is the solution of the equation (\ref{24}). Then, we have
  $$\pi^{m,\epsilon}\Rightarrow\pi\quad in\quad(K,\mathcal{T}_1),\quad\quad\pi^\epsilon\Rightarrow\pi\quad in\quad(K,\mathcal{T}_2)$$
  as $\epsilon\rightarrow 0,$ where $\pi$ is the probability law on $K$ induced by the
  solution of equation (\ref{2}) and $K$ is defined in Subsection $\ref{ss1}.$
\end{theorem}

\begin{theorem}\label{27}(Effective reduction for non-integrable jump kernel)
  Assume that assumptions $(a)$ and $(b)$  hold. Let $v^\epsilon$ be the solution of  the heterogeneous equation (\ref{21}). We denote $\pi_1^{m_1,\epsilon},$ $\pi_1^\epsilon$ the laws of $m_1^\epsilon v^\epsilon$and $v^\epsilon$ respectively, where we have set $m_1^\epsilon(x)=m_1(\frac{x}{\epsilon})$, $(m_1^\epsilon v^\epsilon)(x)=m_1(\frac{x}{\epsilon})v^\epsilon(x),$ and $m_1$ is the solution of the equation (\ref{25}). Then, we have
  $$\pi_1^{m_1,\epsilon}\Rightarrow\pi_1\quad in\quad(K_1,\mathcal{T}_1),\quad\quad\pi_1^\epsilon\Rightarrow\pi_1\quad in\quad(K_1,\mathcal{T}_2)$$
  as $\epsilon\rightarrow 0,$ where $\pi_1$ is the probability law on $K_1$ induced by the
  solution of equation (\ref{22}) and $K_1$ is defined in Subsection $\ref{ss1}.$
\end{theorem}

\begin{remark}
 In Theorem \ref{1.1} and Theorem \ref{27}, we take the convergences $\pi^{m,\epsilon}\Rightarrow\pi\;\text{in}\;(K,\mathcal{T}_1)$ and
 $\pi_1^{m_1,\epsilon}\Rightarrow\pi_1\; \text{in}\;(K_1,\mathcal{T}_1)$ as conclusions. We can apply them to the proof of the convergence of  the Zakai equations in  Section \ref{s6}.

\end{remark}

We now give two lemmas  about the well-posedness for  the heterogeneous equation (\ref{1}) and and the effective equation  (\ref{2}). Lemma \ref{2.1} will be proved in Appendix A.

\begin{lemma}\label{2.1}(Well-posedness for heterogeneous equation)\\
 There exists a unique mild solution $u^\epsilon_t\in C(0,T;H)\cap L^2(0,T;H^1)$ of the heterogeneous equation (\ref{1}).
\end{lemma}
   The existence and uniqueness of  the solution for  the effective equation (\ref{2}) is well known (Pardoux \cite{27}).
\begin{lemma}\label{2.2}(Well-posedness for effective equation)\\
There exists a solution $u\in L^2([0,T];H^1)$ for the effective equation (\ref{2}).
It is unique in the sense:
  $\mathbb{P}\Big(u=v$ in $H^{-1}$,\; $\forall t\in[0,T]\Big)=1$, for every $u,v$ satisfying the equation.
\end{lemma}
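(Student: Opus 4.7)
The plan is to invoke the standard variational/semigroup theory for linear parabolic SPDEs, noting that after homogenization all coefficients have become constants. Indeed, setting $c:=\int_{\mathbb{T}}\sigma(y)m(y)dy$, the equation (\ref{2}) reads
\begin{equation*}
du(t,x)=Q\,u_{xx}(t,x)\,dt+c\,u(t,x)\,dW_t,\qquad u(0,x)=u_0(x),
\end{equation*}
so $T^{0}$ is a constant multiple of the 1D Laplacian and $M^{0}$ is multiplication by a constant. This falls squarely within Pardoux's framework \cite{27}, and the job is essentially to verify the hypotheses.

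First, I would verify that $Q>0$. By (\ref{14}) both contributions to $Q$ are non-negative (the kinetic term is the $L^{2}(m\,dy)$-norm squared of $\chi'+1$, and the jump term has a non-negative integrand by assumption (iv) and (iii)). Strict positivity of the first term follows from assumption (ii), $a(y)\geq\kappa$, combined with the fact that $\chi'\equiv-1$ is incompatible with periodicity of $\chi$ on $\mathbb{T}$. With $Q>0$, the operator $T^{0}$ generates the Gaussian heat semigroup $S(t)=e^{tQ\partial_x^{2}}$ on $H=L^{2}(\mathbb{R})$, and the candidate mild solution is
\begin{equation*}
u(t,x)=S(t)u_{0}(x)+c\int_{0}^{t}S(t-s)u(s,\cdot)\,dW_{s}.
\end{equation*}

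Second, existence by Picard iteration: define $u_{n+1}$ from $u_{n}$ through the formula above, starting from $u_{0}(t,x)\equiv S(t)u_{0}(x)$. Using the Burkholder--Davis--Gundy inequality together with the contractivity of $S(t)$ on $H$ and the boundedness of the multiplicative constant $c$, one obtains
\begin{equation*}
\mathbb{E}\sup_{t\leq T}\|u_{n+1}(t)-u_{n}(t)\|_{H}^{2}\leq C\,c^{2}\int_{0}^{T}\mathbb{E}\sup_{s\leq r}\|u_{n}(s)-u_{n-1}(s)\|_{H}^{2}\,dr,
\end{equation*}
and iterating gives a Cauchy sequence in $L^{2}(\Omega;C([0,T];H))$. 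The limit lies in $L^{2}([0,T];H^{1})$ by the smoothing property of $S(t)$ applied to the stochastic convolution (standard maximal regularity for the heat semigroup, or energy estimate obtained from Itô's formula applied to $\|u\|_{H}^{2}$ in the Gelfand triple $H^{1}\hookrightarrow H\hookrightarrow H^{-1}$).

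Third, uniqueness: given two solutions $u,v$, the difference $w=u-v$ has zero initial data and satisfies the same linear equation. Applying Itô's formula to $\|w\|_{H}^{2}$ in the triple $H^{1}\hookrightarrow H\hookrightarrow H^{-1}$ yields
\begin{equation*}
\mathbb{E}\|w(t)\|_{H}^{2}+2Q\int_{0}^{t}\mathbb{E}\|w_{x}(s)\|_{H}^{2}\,ds=c^{2}\int_{0}^{t}\mathbb{E}\|w(s)\|_{H}^{2}\,ds,
\end{equation*}
so Gronwall gives $w\equiv0$ in $H$ almost surely for every $t\in[0,T]$, hence also in $H^{-1}$, which is the claimed uniqueness statement. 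There is no real obstacle here apart from the quick check $Q>0$; the rest is a textbook application of Pardoux's linear coercive theory and could equivalently be obtained by taking Fourier transforms and solving the resulting geometric Brownian-motion style SDE mode by mode.
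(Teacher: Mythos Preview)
Your proposal is correct and matches the paper's approach: the paper gives no proof at all for this lemma, simply stating that the result ``is well known (Pardoux \cite{27})'' and citing the standard linear parabolic SPDE theory. Your write-up is a faithful (and more detailed) unpacking of exactly that citation, including the helpful explicit check that $Q>0$, which the paper leaves implicit.
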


Next, we provide a brief description of the existence and uniqueness of the weak solutions for equations (\ref{21}) and (\ref{22}).

We add  terms $\epsilon_1\Delta v^\e(t,x)$ and $\epsilon_1\Delta v(t,x)$ to the right hands of the equations (\ref{21}) and (\ref{22}) respectively. The well-posedness of the new equations follow by \cite{27}. Then by a vanishing viscosity method (\cite{HJC}), we obtain
the existence and uniqueness of the weak solutions for equations (\ref{21}) and (\ref{22}).

\section{Ideas of Proof in a Simplified Setting}\label{s3}

Here we outline  the ideas for the  proving  Theorem \ref{1.1}, but in a simplified setting.

Denote $\eta=\frac{x}{\epsilon}$ a variable on the period: $\eta\in \mathbb{T}$. Let  $a(x)=b(x)=0$ and $\lambda(x)=\sigma(x)=1.$ Then from equations (\ref{9465}) and (\ref{24}), we have $\chi(\eta)=0, m(\eta)=1$ respectively. We can provide an outline of our argument for the following equation:
\begin{equation}\label{864}
\begin{cases}
du^\epsilon(t,x)=B^\epsilon u^\epsilon(t,x)dt+u^\epsilon(t,x)dW_t,\\
u^\epsilon(0,x)=u_0(x),
\end{cases}
\end{equation}
and show the following is the effective equation:
\begin{equation}\label{865}
\begin{cases}
du(t,x)=B^0 \Delta u(t,x)dt+u(t,x)dW_t,\\
u(0,x)=u_0(x),
\end{cases}
\end{equation}
where $B^0=\frac{1}{2}\int_{\mathbb{R}}z^2c(z)dz.$

At first, from Lemma \ref{2.3} and Lemma \ref{2.4}, we conclude the uniform estimates and equicontinuity of the solution $u^\epsilon$ of equation (\ref{864}) respectively.  We can obtain the tightness
of the probability law $\pi^\epsilon$ induced by the solution $u^\epsilon$ in $(K,\mathcal{T}_1)$.
By the Prohorov theorem, there exists a subsequence $\epsilon_k$ and a probability measure $\widetilde {\pi}$ such that $\pi^{\epsilon_k}\rightarrow\widetilde {\pi}$ in $(K,\mathcal{T}_1),$
as $\epsilon$ goes to $0.$
Then we have $\pi^{\epsilon_k}\rightarrow\widetilde {\pi}$ in $(K,\mathcal{T}_2)$ by Lemma \ref{2.7}.

Next, we verify that $\widetilde {\pi}$ coincides with the law induced by the solution of the effective equation (\ref{865}).
By Lemma \ref{3542}, we just need to show the formula (\ref{8}) goes to $0$, as $\epsilon\rightarrow 0$ .
The most important and difficult part in this step is constructing  a family of test functions  $\xi_\epsilon$  such that $(B^\epsilon)^*\xi_\epsilon\rightarrow B^0\xi'',$ as $\epsilon$
goes to $0.$  Let us define $\xi^{\epsilon}\in C_c^\infty(\mathbb{R})$ by
 \begin{equation*}
 \xi^{\epsilon}(x)=\xi(x)+\epsilon h_{1}\l(\frac{x}{\epsilon}\r)\xi'(x)+\epsilon^2 h_2 \l(\frac{x}{\epsilon}\r)\xi''(x).
 \end{equation*}
 Through simple calculations, let $z=\frac{x-y}{\epsilon}$ we have
 \[
 \begin{split}
 (B^\epsilon)^*\xi(x)&=\frac{1}{\epsilon^2}\int_{\mathbb{R}}c(z)\l[\l(-\epsilon z\r)\xi'(x)+\frac{1}{2}(\epsilon z)^2\xi''(x)\r]dz+o(\epsilon)\\
 &=-\frac{1}{\epsilon}\l(\int_{\mathbb{R}}zc(z)dz\r)\xi'(x)+\frac{1}{2}\l(\int_{\mathbb{R}}z^2c(z)dz\r)\xi''(x)+o(\epsilon).
 \end{split}
 \]

 \[
 \begin{split}
 \epsilon(B^\epsilon)^*\l(\xi'(x)h_1(x)\r)&=\l(\frac{1}{\epsilon}\int_{\mathbb{R}}c(z)\l[h_1\l(\frac{x}{\epsilon}-z\r)-h_1\l(\frac{x}{\epsilon}\r)\r]dz\r)\xi'(x)\\
 &\quad-\l(\int_{\mathbb{R}}h_1\l(\frac{x}{\epsilon} -z\r)zc(z)dz\r)\xi''(x)+o(\epsilon).
 \end{split}
 \]
 \begin{small}
 $$\epsilon^2(B^\epsilon)^*\l(\xi''(x)h_2(x)\r)=\l(\int_{\mathbb{R}}\l[h_2\l(\frac{x}{\epsilon}-z\r)-h_2\l(\frac{x}{\epsilon}\r)\r]c(z)dz\r)\xi''(x)+o(\epsilon).$$
\end{small}
Then, we assume that the average of each component of functions $h_1(\eta), h_2(\eta)$ over the period is equal to $0,$ and we set
$$\int_{\mathbb{R}}c(x-\eta)\l(h_1(x)-h_1(\eta)\r)dx=\int_{\mathbb{R}}zc(z)dz=0,$$
$$\int_{\mathbb{R}}c(x-\eta)\l(h_2(x)-h_2(\eta)\r)dx=\int_{\mathbb{R}}h_1(\eta)zc(z)dz.$$
We can obtain that $(B^\epsilon)^*\xi_\epsilon\rightarrow \frac{1}{2}\l(\int_{\mathbb{R}}z^2c(z)dz\r)\xi'',$ as $\epsilon$ goes to $0.$
The result is consistent with the equation (\ref{14}).
Then $\widetilde {\pi}$ is the law induced by the solution of the effective equation (\ref{865}) by Lemma \ref{2.11}.



\section{Proof of Theorem 1}\label{s4}
 The proof of \textbf{Theorem \ref{1.1}} is  divided into two steps: the tightness and the limit law.

\subsection{Tightness}
We denote $\pi^{m,\epsilon}$   the probability measure induced by $m^\epsilon u^\epsilon$. we will show the tightness of $\{\pi^{m,\epsilon}:  \epsilon >0\}$ in $(K,\mathcal {T}_1)$.
    \begin{lemma}\label{2.5}
      If $u^\epsilon$ is a solution of heterogeneous equation (\ref{1}), and $\pi^{m,\epsilon}$ is the probability measure induced by $m^\epsilon u^\epsilon$. Then,
      $\{\pi^{m,\epsilon}:\epsilon>0\}$ is tight in $(K,\mathcal{T}_1)$.
\end{lemma}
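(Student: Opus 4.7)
The plan is to follow the classical martingale/SPDE tightness scheme: derive the SPDE satisfied by $v^\epsilon := m^\epsilon u^\epsilon$, establish uniform-in-$\epsilon$ a priori bounds in a strong spatial norm, combine with uniform time-regularity bounds in a weaker norm, and conclude via a standard compactness/tightness criterion (Aldous or Simon). The role of multiplying $u^\epsilon$ by the invariant density $m^\epsilon$ is absolutely essential: because $m$ solves $\tilde T^* m=0$, the factor $m^\epsilon$ kills the leading-order singular contributions of $T^\epsilon$, which otherwise would carry prefactors $\epsilon^{-1}$ and $\epsilon^{-3}$.

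First I would exploit Lemma \ref{2.1} together with the mild formulation and energy arguments (Gronwall plus BDG on the stochastic integral) in the weighted space $H_{-\lambda}$ to obtain bounds of the form
\[
\sup_\epsilon \mathbb{E}\Bigl[\sup_{t\in[0,T]}\|u^\epsilon(t)\|_{H_{-\lambda}}^2\Bigr]+\sup_\epsilon \mathbb{E}\int_0^T\|u^\epsilon(t)\|_{H^1_{-\lambda}}^2\,dt \le C,
\]
which transfer immediately to $v^\epsilon=m^\epsilon u^\epsilon$ since $m$ is smooth, periodic, and bounded away from $0$ and $\infty$. Next I would compute $dv^\epsilon = m^\epsilon T^\epsilon u^\epsilon\,dt + \sigma^\epsilon v^\epsilon\,dW_t$ and, by testing against $\phi\in H^1_{\lambda}$ and integrating by parts in the nonlocal and drift terms, rewrite $\langle m^\epsilon T^\epsilon u^\epsilon,\phi\rangle$ so that the singular prefactors $\epsilon^{-1},\epsilon^{-3}$ are absorbed using $\tilde T^*m=0$, the symmetry $c(z)=c(-z)$, the centering condition on $b$, and Taylor expansion of $c((x-y)/\epsilon)$. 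The outcome is the uniform bound
\[
\|m^\epsilon T^\epsilon u^\epsilon\|_{H^{-1}_{-\lambda}} \le C\bigl(\|u^\epsilon\|_{H_{-\lambda}}+\|u^\epsilon\|_{H^1_{-\lambda}}\bigr),
\]
independent of $\epsilon$.

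With this in hand, the time-increments estimate follows routinely: for $0\le s<t\le T$, applying the bound above to the drift piece and BDG to the martingale piece yields $\mathbb{E}\|v^\epsilon(t)-v^\epsilon(s)\|_{H^{-1}_{-\lambda}}^2 \le C(t-s)$, uniformly in $\epsilon$. Combined with the uniform $L^2(0,T;H^1_{-\lambda})$-bound and the compactness of the embedding $H^1_{-\lambda}\hookrightarrow H^{-1}_{-\lambda}$ (which holds on all of $\mathbb R$ thanks to the exponential weight $e^{-\lambda\theta}$, which effectively localizes mass to a compact set and produces a Rellich-type compactness), a Simon/Aldous-type compactness argument delivers tightness of $\{\pi^{m,\epsilon}\}$ in $C(0,T;H^{-1}_{-\lambda})$ endowed with $\mathcal T_1$.

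The main obstacle is the cancellation step described in the middle of the second paragraph: the naive bound on $m^\epsilon T^\epsilon u^\epsilon$ blows up like $\epsilon^{-3}$, and uniform control requires carefully moving derivatives onto the test function, using the adjoint identity $\tilde T^* m=0$ together with the periodicity of all coefficients and the symmetry and second-moment integrability of $c$. Once this uniform $H^{-1}_{-\lambda}$ bound on the drift is secured, the remaining steps are standard; without it, nothing in the tightness argument is uniform in $\epsilon$.
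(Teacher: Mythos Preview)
Your plan is the paper's: energy estimates uniform in $\epsilon$ (Lemma~\ref{2.3}), a cancellation argument exploiting $\tilde T^{*}m=0$ and the centering of $b$ to bound the drift $m^\epsilon T^\epsilon u^\epsilon$ against smooth test functions uniformly in $\epsilon$ (Lemma~\ref{2.4}), and then compactness via the embedding $H\hookrightarrow H^{-1}_{-\lambda}$. Two points of implementation differ from the paper and are worth flagging.

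First, the paper does not obtain an $H^{-1}_{-\lambda}$ bound on the drift but only the weaker control $|\langle T^\epsilon_m u,\varphi\rangle|\le C\|\varphi\|_{H^2}\|u\|_{H^1}$. The mechanism for killing the $\epsilon^{-1}$ term is not a Taylor expansion of $c$: one writes $\beta^m:=b^m-(a^m)'=\gamma'$ for a periodic $\gamma$ (whose existence uses $\int_{\mathbb T}\beta^m\,dy=0$, i.e.\ the centering condition), so that $\epsilon^{-1}(\beta^{m,\epsilon}u,\varphi')=((\gamma^\epsilon)'u,\varphi')$ can be integrated by parts at the price of a $\varphi''$ term. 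Your claimed $H^{-1}$ bound would require keeping only one derivative on $\varphi$, which is not evidently available; the $H^{-2}$-type estimate, however, is all that is needed downstream. Second, the paper proves fourth-moment increment estimates $\mathbb E[(m^\epsilon u^\epsilon_t-m^\epsilon u^\epsilon_s,\varphi)^4]\le C|t-s|^2$ and applies Kolmogorov's criterion to the scalar processes $(m^\epsilon u^\epsilon,\varphi)$, then passes to $(K,\mathcal T_1)$ via the compact embedding. Your second-moment bound $\mathbb E\|v^\epsilon(t)-v^\epsilon(s)\|^2\le C(t-s)$ sits exactly at the Kolmogorov threshold and does not by itself give $C([0,T])$-tightness via a Simon-type lemma; it does suffice for Aldous in Skorokhod space (with path continuity then upgrading to $C$), but bumping to fourth moments via BDG is routine and matches the paper more directly.
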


To prove Lemma \ref{2.5}, we begin with the following two lemma.

\begin{lemma}\label{2.3}
    Let $u_t^\epsilon$\;be a solution of heterogeneous equation (\ref{1}) with initial value $u_0\in H$. Then there exists a positive constant C,  independent of $\epsilon$, such that
    \begin{equation}\label{88}
    \sup_\epsilon \mathbb{E}\l[\sup_{0\leq t\leq T}\left\|u^\epsilon_t\right\|_0^4\r]+\sup_\epsilon \mathbb{E}\l[\l(\int_0^T\left\|u^\epsilon_t\right\|^2_1 dt\r)^2\r]\leq C\l(1+\left\|u_0\right\|^4_0\r).
    \end{equation}
  \begin{proof}
    First we assume that $u_0$ is in $C_c^\infty(\mathbb{R})$, and choose a version of $u^\epsilon$ such that $u^\epsilon_t(x)\in C_0^2([0,T]\times\mathbb{R}),
    \mathbb{P}$-almost surely. This is possible under the assumptions $(\romannumeral 1)-(\romannumeral 4)$. By It\^{o}'s formula, we have
    \begin{equation*}
    \begin{split}
    u^\epsilon_t(x)^2&=u_0(x)^2+2\int_0^t(T^\epsilon u^\epsilon_s)(x)u^\epsilon_s(x)ds\\&\quad+\int_0^t\Big(u^\epsilon_s(x)\sigma\l(\frac{x}{\epsilon}\r)\Big)^2ds
    +2\int_0^t u^\epsilon_s(x)^2\sigma\l(\frac{x}{\epsilon}\r)dW_s.
    \end{split}
    \end{equation*}
    This  is equivalent to
    \begin{equation}\label{5}
    \begin{split}
    m^\epsilon(x)u^\epsilon_t(x)^2&=m^\epsilon(x)u_0(x)^2+2\int_0^t(T_m^\epsilon u^\epsilon_s)(x)u^\epsilon_s(x)ds\\
    &\quad+\int_0^tm^\epsilon(x)\Big(u^\epsilon_s(x)\sigma\l(\frac{x}{\epsilon}\r)\Big)^2ds\\
    &\quad+2\int_0^tm^\epsilon(x) u^\epsilon_s(x)^2\sigma\l(\frac{x}{\epsilon}\r)dW_s,
    \end{split}
    \end{equation}
    where $m^\epsilon(x)=m(\frac{x}{\epsilon})$,  and $T^\epsilon_m=m^\epsilon T^\epsilon$ is defined by:
    \begin{equation}
      \begin{split}
    (T^\epsilon_mu)(x)&=a^m\l(\frac{x}{\epsilon}\r)u''(x)+\frac{1}{\epsilon}b^m\l(\frac{x}{\epsilon}\r)u'(x)\\
    &\quad+\frac{1}{\epsilon^3}\lambda^m\l(\frac{x}{\epsilon}\r)\int_{\mathbb{R}}c\l(\frac{x-y}{\epsilon}\r)\Big(u(y)-u(x)\Big)dy\\&=
    \Big(a^m\l(\frac{x}{\epsilon}\r)u'(x)\Big)'+\frac{1}{\epsilon}\beta^m\l(\frac{x}{\epsilon}\r)u'(x)\\
    &\quad+
    \frac{1}{\epsilon^3}\lambda^m\l(\frac{x}{\epsilon}\r)\int_{\mathbb{R}}c\l(\frac{x-y}{\epsilon}\r)\Big(u(y)-u(x)\Big)dy.
      \end{split}
    \end{equation}
    We introduce notations:  $a^m(\eta)\!=\!a(\eta)m(\eta), b^m(\eta)\!=\!m(\eta)b(\eta), \lambda^m(\eta)\!=\!\lambda(\eta)\cdot\\
    m(\eta)$, and $$\beta^m(\eta)=b^m(\eta)-a^m(\eta)'.$$
    Then from the fact that  $(\tilde{T})^*m(\eta)=0$, we obtain
    \begin{equation}
      \begin{split}
    &\Big(B_m^\epsilon u+\frac{1}{\epsilon}\beta^m\l(\frac{x}{\epsilon}\r)u',u\Big)\\&=\Big(-\frac{1}{2\epsilon^2}(\beta^m)^{'}\l(\frac{x}{\epsilon}\r),u^2\Big)
    +\frac{1}{2\epsilon^3}\bigg(\lambda^m(\frac{x}{\epsilon}),\int_{\mathbb{R}}c(\frac{x-y}{\epsilon})\Big(u^2(y)-u^2(x)\Big)dy\bigg)
    \\&\quad+(B_m^\epsilon u,u)-\frac{1}{2\epsilon^3}\bigg(\lambda^m\l(\frac{x}{\epsilon}\r),\int_{\mathbb{R}}c\l(\frac{x-y}{\epsilon}\r)\Big(u^2(y)-u^2(x)\Big)dy\bigg)\\
    &=\frac{1}{2\epsilon^2}\Big((\tilde{T})^*m,u^2\Big)+\frac{1}{\epsilon^3}\int_{\mathbb{R}}\lambda^m\l(\frac{x}{\epsilon}\r)
    \int_{\mathbb{R}}c\l(\frac{x-y}{\epsilon}\r)\Big(u(y)-u(x)\Big)u(x)dydx\\
    &\quad-\frac{1}{\epsilon^3}\frac{1}{2}\int_{\mathbb{R}}\lambda^m\l(\frac{x}{\epsilon}\r)
    \int_{\mathbb{R}}c\l(\frac{x-y}{\epsilon}\r)\Big(u^2(y)-u^2(x)\Big)dydx\\
    &=-\frac{1}{2\epsilon^3}\int_{\mathbb{R}}\int_{\mathbb{R}}\lambda^m\l(\frac{x}{\epsilon}\r)
    c\l(\frac{x-y}{\epsilon}\r)\Big(u(y)-u(x)\Big)^2dydx\leq 0,
     \end{split}
    \end{equation}
    For convenience,  we denote $a^{m,\epsilon}(x)=a^m\l(\frac{x}{\epsilon}\r)$, $u^\epsilon(t,x)=u^\epsilon_t$, for $t\in [0,T]$.
    Integrating both sides of (\ref{5}) with respect to $x$,  we have
    \begin{equation}\label{6}
    \begin{split}
    (m^\epsilon u^\epsilon_t,u^\epsilon_t)&=(m^\epsilon u_0,u_0)-2\int^t_0\Big(a^{m,\epsilon}(u^\epsilon_s)',(u^\epsilon_s)'\Big)ds\\
    &\quad+\int_0^t(m^\epsilon u^\epsilon_s\sigma^\epsilon,u^\epsilon_s\sigma^\epsilon)ds
    +2\int_0^t(m^\epsilon u^\epsilon_s\sigma^\epsilon, u^\epsilon_s)dW_s\\
    &\quad-\frac{1}{2\epsilon^3}\int_0^t\int_{\mathbb{R}}\int_{\mathbb{R}}\lambda^m\l(\frac{x}{\epsilon}\r)
    c\l(\frac{x-y}{\epsilon}\r)\Big(u(y)-u(x)\Big)^2dydxds,
    \end{split}
    \end{equation}
    where $m$ satisfies $\delta<m<\delta^{-1}$ for some $\delta>0$(p.$380$ of \cite{BAL} and \cite{HuangQiao}), and $\sigma(\cdot)$ is bounded. From now on, we will denote by $C_i, i=1,2, \cdots, $ the constants which may depend on $\kappa_1,\delta,T$. Then we have
    \begin{equation}
    \begin{split}\label{7}
    \delta\left\|u^\epsilon_t\right\|^2_0&+2\kappa_1\delta\int_0^t\left\|\nabla u_s^\epsilon\right\|^2_0ds\leq\delta^{-1}\left\|u_0\right\|^2_0+C_2\int_0^t\left\|u_s^\epsilon\right\|^2_0ds\\
    &+2\int_0^t(m^\epsilon u^\epsilon_s\sigma^\epsilon, u^\epsilon_s)dW_s.
    \end{split}
    \end{equation}
    By Gronwall's inequality, we obtain
    $$\sup_\epsilon \sup_{0\leq t\leq T} \mathbb{E}\left[\left\|u^\epsilon_t\right\|^2_0\right]\leq C_3\left(1+\left\|u_0\right\|^2_0\right).$$
    Applying Ito's formula to equation (\ref{6}) we can see
    \begin{equation*}
      \begin{split}
       \left(m^\epsilon u^\epsilon_t,u^\epsilon_t\right)^2&+4\int^t_0\Big(a^{m,\epsilon}(u^\epsilon_s)',(u^\epsilon_s)'\Big)(m^\epsilon u_s^\epsilon,u_s^\epsilon)ds\\
       &\leq(m^\epsilon u_0,u_0)^2
    +2\int_0^t(m^\epsilon u^\epsilon_s\sigma^\epsilon,u^\epsilon_s\sigma^\epsilon)(m^\epsilon u_s^\epsilon,u_s^\epsilon)ds\\
    &+4\int_0^t(m^\epsilon u^\epsilon_s\sigma^\epsilon, u^\epsilon_s)(m^\epsilon u_s^\epsilon,u_s^\epsilon)dW_s
    +4\int_0^t(m^\epsilon u^\epsilon_s\sigma^\epsilon, u^\epsilon_s)^2ds.
      \end{split}
    \end{equation*}
    Now we consider the expectation after taking the supremum with respect to $t$ of both sides, we deduce that
    \[
    \begin{split}
    &C_4\mathbb{E}\left[\sup_\epsilon \sup_{0\leq s\leq t}\left\|u^\epsilon_s\right\|^4_0\right]+C_4\mathbb{E}\left[\int_0^t\left\|\nabla u_s^\epsilon\right\|^2_0\left\|u_s^\epsilon\right\|^2_0ds\right]\\
    &\quad \leq \left\|u_0\right\|^4_0+\int_0^t \mathbb{E}\left[\left\|u^\epsilon_s\right\|^2_0\right]ds+\int_0^t \mathbb{E}\left[\sup_{0\leq r\leq s}\left\|u^\epsilon_r\right\|^4_0\right]ds\\
    &\quad +\mathbb{E}\Bigg[\sup_{0\leq s\leq t}\left|\int_0^s (m^\epsilon u^\epsilon_r,u^\epsilon_r)^2dW_r\right|\Bigg].
    \end{split}
    \]
    By Burkholder-Davis-Gundy inequality \cite{15}, the third term of the right hand side can be estimated as
    \[
    \begin{split}
    &\mathbb{E}\Bigg[\sup_{0\leq s\leq t}\left|\int_0^s \sigma^\epsilon (m^\epsilon u^\epsilon_r,u^\epsilon_r)^2W_r\right|\Bigg]\\
    &\quad \leq C_5\mathbb{E}\Bigg[\Big(\int_0^t(1+\left\|u_s^\epsilon\right\|^2_0)\left\|u_s^\epsilon\right\|_0^6ds\Big)^{\frac{1}{2}}\Bigg]\\
    &\quad \leq \frac{C_5}{2}\mathbb{E}\Bigg[\sup_{0\leq s\leq t}\left\|u_s^\epsilon\right\|^4_0\Bigg]+C_6\mathbb{E}\Bigg[\int_0^t(1+\left\|u_s^\epsilon\right\|^2_0)\left\|u_s^\epsilon\right\|^2_0ds\Bigg].
    \end{split}
    \]
    Thus, we conclude that
    $$\sup_\epsilon\sup_{0\leq t\leq T} \mathbb{E}\left[\left\|u^\epsilon_t\right\|^4_0\right]\leq C_7\left(1+\left\|u_0\right\|^4_0\right).$$
    Moreover, from (\ref{7})
    \[
    \begin{split}
    \mathbb{E}\Bigg[\left(\int_0^T\left\|\nabla u_s^\epsilon\right\|^2_0ds\right)^2\Bigg]&\leq C_8\left(1+\left\|u_0\right\|^4_0\right)+C_8\mathbb{E}\Bigg[\left(\int_0^T\left\|u_s^\epsilon\right\|^2_0ds\right)^2\Bigg]\\
    &\quad +C_8\mathbb{E}\left[\left(\int_0^T\left(m^\epsilon\sigma^\epsilon u_s^\epsilon,u_s^\epsilon\right)dW_s\right)^2\right]\\
    &\leq C_9\left(1+\left\|u_0\right\|^4_0\right).
    \end{split}
    \]
    We can further verify that (\ref{88})    holds for every $u_0\in H$,  by a density argument.
      \end{proof}
    \end{lemma}

    Next, we shall show the equicontinuity of $\{(m^\epsilon u^\epsilon,\varphi)\}_{\epsilon>0}$ for each $\varphi\in C_c^\infty(\mathbb{R}).$

    \begin{lemma}\label{2.4}
    Let $u_t^\epsilon$  be the  solution of  the heterogeneous equation (\ref{1}) with initial value $u_0\in H$. Then, for every $\varphi\in C_c^\infty(\mathbb{R})$, there exists
    a positive constant $C$ such that
    $$\sup_\epsilon \mathbb{E}\l[\l(m^\epsilon u_t^\epsilon-m^\epsilon u_s^\epsilon,\varphi\r)^4\r]\leq C\l|t-s\r|^2\l(1+\l\|u_0\r\|^4_0\r),$$
    for all $s,t\in [0,T]$.
    \begin{proof}
    From the definition of $u^\epsilon$, we infer that
    $$\l(m^\epsilon u^\epsilon_t-m^\epsilon u_s^\epsilon,\varphi\r)^4\leq 8\l(\int_s^t\l(T_m^\epsilon u^\epsilon_r,\varphi\r)dr\r)^4+
    8\l(\int_s^t\l(m^\epsilon u_r^\epsilon \sigma\l(\frac{x}{\epsilon}\r),\varphi\r)dW_r\r)^4.$$
    Note that there exists a  function $\gamma$ in $H^1(\mathbb{T})$, such that
    $$\beta^m(\eta)=\gamma'(\eta), \eta\in\mathbb{T}.$$
    Consider the following partial differential equation on $\mathbb{T}$:
    \[
    \begin{cases}
      (\widetilde{A}_m)^*\varrho-\beta^m=0,\\
      \int_0^1\varrho(\eta)d\eta=0,
    \end{cases}
    \]
    where $\tilde{A}_m=m\tilde{A}.$
    Then $$\gamma(\eta)=a^m(\eta)\varrho'(\eta)-\beta^m\varrho(\eta).$$
    We denote $\l(\Gamma u\r)(x)=\int_{\mathbb{R}}c\l(\frac{x-y}{\epsilon}\r)\l(u(y)-u(x)\r)dy$ and $\beta^{m,\epsilon}=\beta^m\l(\frac{x}{\epsilon}\r)$,$\gamma^\epsilon=\gamma\l(\frac{x}{\epsilon}\r)$. It follows that $\Gamma$ is  a symmetric operator on $L^2(\mathbb{R}).$
    Moreover,
    \[
    \begin{split}
    \left \langle T^\epsilon_mu,\varphi \right \rangle=&\l(a^{m,\epsilon}u',\varphi'\r)-\epsilon^{-1}\l(\beta^{m,\epsilon}u,\varphi'\r)\\
    &+\frac{1}{2}\int_{\mathbb{R}}\lambda^{m,\varepsilon}(x)\int_{\mathbb{R}}c(\frac{x-y}{\epsilon})\bigg[\Big(u(x)\big(\varphi(y)-\varphi(x) \big)\Big)\\
    &+\varphi(y)\big(u(x)-u(y)\big)\bigg]dydx\\
\leq&\l(a^{m,\epsilon}u',\varphi'\r)-\epsilon^{-1}\l(\beta^{m,\epsilon}u,\varphi'\r)+\frac{C_2}{\delta}\big(\Gamma u(x),\varphi(x)\big),
    \end{split}
    \]
    where $\l\|\Gamma u\r\|^2_0\leq a_1^2\l\|u\r\|^2_0.$  Hence
    \[
    \begin{split}
    \left \langle T^\epsilon_mu,\varphi\right \rangle&\leq\l(a^{m,\epsilon}u',\varphi'\r)+(\gamma^\epsilon u',\varphi')+(\gamma^\epsilon u,\varphi'')+C_0||u||_0||\varphi||_0\\
    &\leq C_1||\varphi||_{H^2}||u||_1,
    \end{split}
    \]
    for some $C_0, C_1>0$. Then we deduce that
    \[
    \begin{split}
    \mathbb{E}\l[\l(m^\epsilon u^\epsilon_t-m^\epsilon u_s^\epsilon,\varphi\r)^4\r]&\leq C_2\l\|\varphi\r\|_{H^2}^4\l|t-s\r|^2\Bigg\{\mathbb{E}\l[\l(\int_s^t\l\|u_r^\epsilon\r\|_1^2dr\r)^2\r]
    +1\\&+\mathbb{E}\l[\sup_{0\leq t\leq T}\l\|u_r^\epsilon\r\|^4_0\r]\Bigg\}
    \leq C_3||\varphi||_{H^2}^4\l|t-s\r|^2\l(1+\l\|u_0\r\|^4_0\r).
    \end{split}
    \]
    The proof is complete.
    \end{proof}
    \end{lemma}
As a result of Lemma \ref{2.3} and Lemma \ref{2.4}, we will show the Lemma \ref{2.5}.

      \begin{proof}
     It is obvious that $$\sup_\epsilon \mathbb{E}\l[\sup_{0\leq t\leq T}\l\|m^\epsilon u^\epsilon_t\r\|^4_{L^2}\r]<\infty.$$
     By Kolmogorov's tightness criterion \cite{Bou}, we can obtain the tightness of real valued processes $\{\l(m^\epsilon u^\epsilon,\varphi\r);\epsilon>0\}$ for
     every $\varphi\in C_c^\infty(\mathbb{R})$(Section $4$ of \cite{31}). Since the injection $H\hookrightarrow H_{-\kappa}^{-1}$ is compact,  $\{\pi^{m,\epsilon};\epsilon>0\}$ is tight in $(K,\mathcal{T}_1)$ .
      \end{proof}

  From the tightness for $\{\pi^{m,\epsilon}: \epsilon>0\}$, we conclude with the following result.
    \begin{lemma}\label{2.6}
      There exists a subsequence $\epsilon_k \rightarrow 0$ as $k\rightarrow\infty$, and a probability measure
      $ \widetilde {\pi}$ on $K$ such that
      $$\pi^{m,\epsilon_k}\Rightarrow\widetilde{\pi} \quad \text{in} \quad (K,\mathcal{T}_1).$$
    \end{lemma}
    \begin{remark}
      If we show the uniqueness of $\widetilde{\pi}$ for every subsequence in Lemma \ref{2.6}, then the whole sequence $\{\pi^{m,\epsilon}: \epsilon>0\}$ converges to $\widetilde{\pi}$
       in $(K,\mathcal{T}_1).$
    \end{remark}

 Next, we will show that $\widetilde{\pi}$ is also the limit measure of $\pi^{\epsilon_k}$ in $(K,\mathcal{T}_2).$
    \begin{lemma}\label{2.7}
     Let $\epsilon_k$ be the subsequence in Lemma \ref{2.6} and $\pi^{\epsilon_k}$ be the probability measure induced by $u^{\epsilon_k}$. Then
      $$\pi^{\epsilon_k}\Rightarrow\widetilde {\pi} \quad in \quad (K,\mathcal{T}_2),$$ as $k\rightarrow\infty$.
      \begin{proof}
      From Skorokhod's theorem, there exists a probability space $\l(\widetilde{\Omega},\widetilde{\mathcal{F}},\\ \widetilde{\mathbb{P}}\r)$ with \emph{K}-valued random
      variables $\widetilde{u}^{m,\epsilon_k},\widetilde{u}$ on $\l(\widetilde{\Omega},\widetilde{\mathcal{F}},\widetilde{\mathbb{P}}\r)$, such that $\pi^{m,\epsilon_k}$ and $\widetilde{\pi} $ are the laws of $\widetilde{u}^{m,\epsilon_k}$ and $\widetilde{u}$, respectively. Moreover,
      $$\widetilde{u}^{m,\epsilon_k} \rightarrow \widetilde{u}$$  in $(K,\mathcal{T}_1)$, $\widetilde{\mathbb{P}}$-almost surely.

Now we want to show that
      $$\widetilde{\mathbb{E}}\l[\int_0^T\l\|(m^{\epsilon_k})^{-1}\widetilde{u}_t^{m,\epsilon_k}-\widetilde{u}_t\r\|^2_{H_{-\kappa}}dt\r]\rightarrow 0,$$
      as $k\rightarrow\infty$,where $\widetilde{\mathbb{E}}$ stands for the expectation  with respect to $\widetilde{\mathbb{P}}$.
      First, we have
      \[
      \begin{split}
      &\widetilde{\mathbb{E}}\l[\int_0^T\l\|(m^{\epsilon_k})^{-1}\widetilde{u}_t^{m,\epsilon_k}-\widetilde{u}_t\r\|^2_{H^{-1}_{-\kappa}}dt\r]\\&\leq
     2\widetilde{\mathbb{E}}\l[\int_0^T\l\|(m^{\epsilon_k})^{-1}\widetilde{u}_t^{m,\epsilon_k}(1-m^{\epsilon_k})\r\|^2_{H^{-1}_{-\kappa}}dr\r]\\
      &\quad+2\widetilde{\mathbb{E}}\l[\int_0^T\l\|\widetilde{u}_t^{m,\epsilon_k}-\widetilde{u}_t\r\|^2_{H^{-1}_{-\kappa}}dt\r]\\
      &=2\mathbb{E}\l[\int_0^T\l\|u_t^{\epsilon_k}(1-m^{\epsilon_k})\r\|^2_{H^{-1}_{-\kappa}}dt\r]+2\widetilde{\mathbb{E}}\l[\int_0^T\l\|\widetilde{u}_t^{m,\epsilon_k}-\widetilde{u}_t\r\|
      ^2_{H^{-1}_{-\kappa}}dt\r].
      \end{split}
      \]
      Let $\zeta(\eta)$ be a solution of the  following partial differential equation:
      \[
      \begin{cases}
        \Delta \zeta(\eta)=1-m(\eta),  \eta\in\mathbb{T}\\
        \int_0^1\zeta(\eta)d\eta=0.
      \end{cases}
      \]
      Then, we have
      $$\l\|u_t^{\epsilon_k}(1-m^{\epsilon_k})\r\|_{H^{-1}_{-\kappa}}=\l\|u_t^{\epsilon_k}\Delta\zeta^{\epsilon_k}\r\|_{H^{-1}_{-\kappa}}
      =\sup_{\upsilon\neq 0}\frac{\l|\left \langle u_t^{\epsilon_k}\Delta\zeta^{\epsilon_k},\upsilon\right \rangle\r|}{\l\|\upsilon\r\|_{H_\kappa^1}}
      \leq C\epsilon_k\l\|u_t^{\epsilon_k}\r\|_1,$$
      for some positive constant $C$, independent of $\epsilon_k,$ where
      $\Delta\zeta^{\epsilon}=(\Delta\zeta)(x/\epsilon).$ Therefore, we conclude that
      \[
      \begin{split}
      \widetilde{\mathbb{E}}\l[\int_0^T\l\|(m^{\epsilon_k})^{-1}\widetilde{u}_t^{m,\epsilon_k}-\widetilde{u}_t\r\|^2_{H^{-1}_{-\kappa}}dt\r]&\leq 2\epsilon_k^2C^2\mathbb{E}
      \l[\int_0^T\l\|u_t^{\epsilon_k}\r\|^2_1dt\r]\\
      &\quad+2\widetilde{\mathbb{E}}\l[\int_0^T\l\|\widetilde{u}_t^{m,\epsilon_k}-\widetilde{u}_t\r\|^2_{H^{-1}_{-\kappa}}dt\r]\rightarrow 0,
       \end{split}
      \]
      as $k\rightarrow\infty$.
      Since $H^1, H_{-\kappa},H_{-\kappa}^{-1}$ are reflective Banach space, and the inclusion $H^1\hookrightarrow H_{-\kappa}$ is compact, $H_{-\kappa}\hookrightarrow H_{-\kappa}^{-1}$ is continuous. We konw that, for every $\rho>0$, there exists a constant $C(\rho)>0$ such that
      $$\l\|\upsilon\r\|_{H_{-\kappa}}\leq \rho \l\|\upsilon\r\|_1+C(\rho)\l\|\upsilon\r\|_{H_{-\kappa}^{-1}},\quad \upsilon\in H^1.$$
      Hence the conclusion in this lemma follows.
      \end{proof}
    \end{lemma}

 \subsection{Identification of the Limit Law}\label{ss42}
 In this section, we will verify that $\widetilde{\pi}$ coincides with the law induced by the solution of the effective equation (\ref{2}).

 Let $X=(X_t)$ be the canonical process, that is, $X_t(\omega)=\omega_t$ for $\omega\in K$, and $\mathcal{D}_t$ the canonical filtration on $K$. We define a $\sigma$-field $\mathcal{D}=\bigvee_{0\leq t\leq T }\mathcal{D}_t$.

 \begin{definition}
   A probability measure $\mu$ on $(K,\mathcal{D})$ is called a solution of martingale problem for the effective equation (\ref{2}) if $\mu$ satisfies the following   conditions: \\
    ($\romannumeral 1$)The probability$  \;\mu(X_0(\cdot)=u_0)=1; $ and\\
    ($\romannumeral 2$)For every $\phi\in C_c^\infty(\mathbb{R})$ and $\xi\in C_c^\infty(\mathbb{R})$, the stochastic process $H_{\phi,\xi}(t)$ defined by
   \[
   \begin{split}
   H_{\phi,\xi}(t)=H_{\phi,\xi}^{T^0,M^0}(t)&=\phi\big(\l\langle X_t,\xi\r\rangle \big)-\phi\big(\l\langle X_0,\xi\r\rangle \big)\!-\!\int_0^t \phi'\big(\l\langle X_s,\xi\r\rangle \big)\l\langle X_s,(T^0)^*\xi\r\rangle ds\\
   &\quad-\frac{1}{2}\int_0^1\phi''\big(\l\langle X_s,\xi\r\rangle \big)\big(M^0(X_s),\xi\big)^2ds,
   \end{split}
   \]
     is a continuous local martingale under $\mu$. That is $\mathbb{E}\l(H_{\phi,\xi}(t)|\mathcal{D}_s\r)=H_{\phi,\xi}(s), \\ 0\leq s\leq t\leq T.$
 \end{definition}

 \begin{lemma}\label{2.8}
   The effective equation (\ref{2}) has at most one martingale solution on $(K,\mathcal{D})$.
   \begin{proof}
   It is sufficient to check the pathwise  uniqueness property of the equation  (\ref{2}) on $K$ (p.$89$ of \cite{MMM}). For a   solution $u_t$   of the effective equation (\ref{2}),  we set $\widetilde{u}_t=u_t\emph{e}^{-\kappa\theta}$. Then, $\widetilde{u}$ satisfies
   the equality
   $$(\widetilde{u}_t,\xi)=(u_0\emph{e}^{-\kappa\theta},\xi)+\int_0^t\left\langle\mathcal{I}
   \widetilde{u}_s, \xi\right\rangle ds+C\int_0^t
   (\widetilde{u}_s,\xi)dW_s,\; \forall \xi\in H^1,$$
   where $\mathcal{I}$ is the operator which satisfy the coerciveness condition of the form
   $$-2\left\langle\mathcal{I}\upsilon,\upsilon\right\rangle+\nu_1|\upsilon|_0^2+\nu_2\geq\nu_3
   |\upsilon|_1^2, \; \forall \upsilon\in H^1.$$

  Therefore, similar to Lemma \ref{2.2}, we have
  \begin{center}
   $\mathbb{P}\left(\widetilde{u}_t=\widetilde{\upsilon}_t, \text{in}\; H^{-1},\; \forall t\in[0,T]\right)=1,$
  \end{center}
    for two stochastic processes $\widetilde{u},\widetilde{\upsilon}$ satisfying the effective equation (\ref{2}). Hence we obtain the pathwise uniqueness of stochastic partial differential equation (\ref{2}) on $K$.
   \end{proof}
 \end{lemma}

   We now show that $\widetilde {\pi}$ is the martingale solution for the effective equation (\ref{2}). We give a equivalent condition
of a continuous $\mathcal{D}_t-$martingale under the probability measure $\widetilde {\pi}.$

\begin{lemma}\label{3542}
   The stochastic process $H_{\phi,\xi}(t)$ is a continuous $\mathcal{D}_t-$martingale under the probability measure $\widetilde {\pi}$, that is,
   for every bounded, $\mathcal{D}_s-$measurable function $\Phi$ on $K$, we have
   $$\mathbb{E}^{\widetilde {\pi}}[\Phi\{H_{\phi,\xi}(t)-H_{\phi,\xi}(s)\}]=0,\;0\leq s\leq t\leq T.$$
\begin{proof}
   For simplicity of description, we shall use $\epsilon$ in place of $\epsilon_k$, the subsequence in Lemma \ref{2.6}. Without lost of generality, we assume that $\Phi$ is continuous with respect to the supremum topology of $\mathcal{T}_1$ and $\mathcal{T}_2$, which generates on $K$ the same Borel $\sigma$-algebra as that generated by $\mathcal{T}_1$ or $\mathcal{T}_2.$

   Define $H_{\phi,\xi}^\epsilon(t)$  on $(K,\mathcal{D})$ by
    \[
   \begin{split}
   H_{\phi,\xi}^\epsilon(t)&=\phi(\l\langle m^\epsilon X_t,\xi\r\rangle)-\phi(\l\langle m^\epsilon X_0,\xi\r\rangle)-\int_0^t \phi'(\l\langle m^\epsilon X_s,\xi\r\rangle)\l\langle X_s,(T^0)^*\xi\r\rangle ds\\
   &\quad-\frac{1}{2}\int_0^1\phi''(\l\langle m^\epsilon X_s,\xi\r\rangle)(M^0(X_s),\xi)^2ds.
   \end{split}
   \]
    From Lemma \ref{2.7}, we can extract a subsequence $\{\epsilon_l\}_{l\geq 1}$ such that
    $$(m^{\epsilon_l})^{-1}\widetilde{u}^{m,\epsilon_l} \rightarrow \widetilde{u} \quad in \quad  (K,\mathcal{T}_2),\; \widetilde {\mathbb{P}}-almost\;surely$$
    as $l\rightarrow\infty$. Therefore, we have
\[
\begin{split}
    \mathbb{E}^{\pi^{\epsilon_l}}\l[\Phi\l\{H_{\phi,\xi}^{\epsilon_l}(t)-H_{\phi,\xi}^{\epsilon_l}
    (s)\r\}\r]&=\widetilde{\mathbb{E}}\Big[\Phi\big((m^{\epsilon_l})^{-1}\widetilde{u}^{m,\epsilon_l}\big)\Big\{H_{\phi,\xi}
    ^{\epsilon_l}(t)\big((m^{\epsilon_l})^{-1}\widetilde{u}^{m,\epsilon_l}\big)\\
    &\quad-H_{\phi,\xi}^{\epsilon_l}(s)\big((m^{\epsilon_l})^{-1}\widetilde{u}^{m,\epsilon_l}\big)\Big\}\Big]\\
    &\rightarrow \widetilde{\mathbb{E}}
    \l[\Phi(\widetilde{u})\l\{H_{\phi,\xi}^{\epsilon_l}(t)(\widetilde{u})-H_{\phi,\xi}^{\epsilon_l}
    (s)(\widetilde{u})\r\}\r]\\&= \mathbb{E}^{\widetilde {\pi}}\l[\Phi\l\{H_{\phi,\xi}(t)-H_{\phi,\xi}(s)\r\}\r]
\end{split}
\]
as $l\rightarrow \infty$ in view of bounded convergence theorem. Then we just need to show that
$$\mathbb{E}^{\pi^{\epsilon_l}}\l[\Phi\l\{H_{\phi,\xi}^{\epsilon_l}(t)-H_{\phi,\xi}^{\epsilon_l}
    (s)\r\}\r]\rightarrow 0$$
as $\epsilon\rightarrow 0$ without extracting subsequence. For $\xi_\epsilon \in C_c^\infty(\mathbb{R})$, we define another function $H_{\phi,\xi_\epsilon}^{A^\epsilon,M^\epsilon}(t)$ on $K$ by
\[
   \begin{split}
   H_{\phi,\xi_\epsilon}^{T^\epsilon,M^\epsilon}(t)&=\phi(\l\langle X_t,\xi_\epsilon\r\rangle)-\phi(\l\langle X_0,\xi_\epsilon\r\rangle)-\int_0^t \phi'(\l\langle X_s,\xi_\epsilon\r\rangle)\l\langle X_s,(T^\epsilon)^*\xi_\epsilon\r\rangle ds\\
   &\quad-\frac{1}{2}\int_0^1\phi''(\l\langle X_s,\xi_\epsilon\r\rangle )(\sigma^\epsilon(x)X_s,\xi_\epsilon)^2ds.
   \end{split}
   \]
   It follows that
   $$\mathbb{E}^{\pi^{\epsilon}}\l[\Phi\l\{H_{\phi,\xi}^{T^\epsilon,M^\epsilon}(t)-H_{\phi,\xi}^{T^
   \epsilon,M^\epsilon}
    (s)\r\}\r]=0,\quad 0\leq s\leq t\leq T.$$ So,
   we consider $$\mathbb{E}^{\pi^\epsilon}\l[\Phi\l\{H_{\phi,\xi}^\epsilon(t)-H_{\phi,\xi}^\epsilon(s)-H_{\phi,\xi_\epsilon}^{T^\epsilon,M^\epsilon}(t)
   +H_{\phi,\xi_\epsilon}^{T^\epsilon,M^\epsilon}(s)\r\}\r],$$
   which is equal to
   \begin{equation}\label{8}
     \mathbb{E}\l[\Phi(u^\epsilon)\l\{I_1(u^\epsilon)-I_2(u^\epsilon)-I_3(u^\epsilon)-I_4(u^\epsilon)-I_5(u^\epsilon)\r\}\r],
   \end{equation}
   where
   \[
\begin{split}
   I_1&=\phi(\l\langle m^\epsilon X_t,\xi\r\rangle )-\phi(\l\langle X_t,\xi_\epsilon\r\rangle)-\phi(\l\langle m^\epsilon X_s,\xi\r\rangle)+
   \phi(\l\langle X_s,\xi_\epsilon\r\rangle),\\
   I_2&=\int_s^t\Big\{\phi'(\l\langle m^\epsilon X_r,\xi\r\rangle )-\phi'(\l\langle X_r,\xi_\epsilon\r\rangle )\Big\}\l\langle X_r,(T^0)^*\xi\r\rangle dr,\\
   I_3&=\int_s^t\phi'(\l\langle X_r,\xi_\epsilon\r\rangle )\l\langle X_r,(T^0)^*\xi-(T^\epsilon)^*\xi_\epsilon)\r\rangle dr,\\
   I_4&=\frac{1}{2}\int_s^t\Big\{\phi''(\l\langle m^\epsilon X_r,\xi\r\rangle )-\phi''(\l\langle X_r,\xi_\epsilon\r\rangle )\Big\}
   (M^0(X_r),\xi)^2dr,\\
   I_5&=\frac{1}{2}\int_s^t\phi''(\l\langle X_r,\xi_\epsilon\r\rangle )\Big\{(M^0(X_r),\xi)^2-(\sigma^\epsilon(x)
   X_r,\xi_\epsilon)^2\Big\}dr.
\end{split}
\]

We will construct a family of test functions $\xi^{\epsilon}\in C_c^\infty(\mathbb{R})$ such that (\ref{8}) goes to $0$ as $\epsilon\rightarrow 0$. So we define $\xi^{\epsilon}\in C_c^\infty(\mathbb{R})$ as follows:
 \begin{equation}\label{15}
 \xi^{\epsilon}(x)=m\l(\frac{x}{\epsilon}\r)\l(\xi(x)+\epsilon h_{1}\l(\frac{x}{\epsilon}\r)\xi'(x)+\epsilon^2 h_2 \l(\frac{x}{\epsilon}\r)\xi''(x)\r),
 \end{equation}
We denote $\tilde{T}_m=m \tilde{T}$. The periodic functions $h_{1}, h_{2}$ are, respectively, the unique solutions  of the following equations
 \begin{equation}\label{12}
  \begin{cases}
  (\tilde{T}_m)^*(h_1)(\eta)=l(\eta),\\
  \int_{\mathbb{T}}h_1(\eta)d\eta=0,
\end{cases}
\end{equation}
 \begin{equation}
  \begin{cases}
(\tilde{T}_m)^*(h_2)(\eta)=l_1(\eta),\\
 \int_{\mathbb{T}}h_2(\eta)d\eta=0,
\end{cases}
\end{equation}
where
$$l(\eta)=\int_{\mathbb{R}}zc(z)m(\eta-z)\lambda(\eta-z)dz+b(\eta)m(\eta)-2(a(\eta)m(\eta))',$$
\[
\begin{split}
l_1(\eta)&=-Q+\int_{\mathbb{R}}c(z)\lambda(\eta-z)m(\eta-z)\l[\frac{1}{2}z^2-zh_1(\eta-z)\r]dz\\
&\quad+a(\eta)m(\eta)+2(a(\eta)m(\eta)h_1(\eta))'-b(\eta)m(\eta)h_1(\eta).
\end{split}
\]
\end{proof}
\end{lemma}

The constant $Q$ is defined in (\ref{14}). The existence and uniqueness of the solution $h_1(\eta),h_2(\eta)$ will be given in Appendix B. Then we have the unique form of the test function $\xi_\epsilon.$
In order to prove the formula (\ref{8}) tends to zero, we give the following lemma which will be proved in the Appendix C.
\begin{lemma}\label{2.10}
  Assume that $f\in \mathcal{S}(\mathbb{R})$, which is the space of rapidly decaying functions.  Then there exist functions $h_1,h_2\in L^2(\mathbb{T})$
  and a positive constant $Q$ defined in (\ref{14}),  such that for the function $\xi^\epsilon$
  defined by (\ref{15}),  we have
  $$(T^\epsilon)^*(\xi^\epsilon)=Q \xi''+\phi_\epsilon,$$ where $$\lim_{\epsilon\rightarrow0}
  \Vert\phi_\epsilon\Vert_0=0.$$
\end{lemma}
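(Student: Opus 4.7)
The plan is to verify that the multi-scale ansatz $\xi^\epsilon$ in (\ref{15}) actually has the stated asymptotic property, and this is essentially a matter of packaging the preceding computation into a clean statement. I would proceed by systematically tracking the formal expansion of $(T^\epsilon)^*\xi^\epsilon$ in powers of $\epsilon$, and showing that with a careful choice of the correctors $h_1,h_2$ and the effective constant $Q$, all negative powers of $\epsilon$ cancel, the order-zero term equals $Q\xi''(x)$, and the remainder decays to zero in $L^2$.

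First I would substitute (\ref{15}) into $(T^\epsilon)^* = (A^\epsilon)^* + (B^\epsilon)^*$. For the local part $(A^\epsilon)^*$ this is a direct differentiation. For the nonlocal part $(B^\epsilon)^*$, I would use the change of variables $z=(x-y)/\epsilon$ and the Taylor expansion with integral remainder (as displayed before the lemma statement) to express $\xi(x-\epsilon z)$ as $\xi(x) - \epsilon z \xi'(x) + \epsilon^2 \int_0^1 \xi''(x-\epsilon z t)z^2(1-t)\,dt$, and similarly for $\xi'$ and $\xi''$. Collecting equal powers of $\epsilon$ yields three families of terms: those at order $\epsilon^{-2}$, which vanish precisely because $(\tilde T)^* m = 0$ (equation (\ref{24})); those at order $\epsilon^{-1}$, whose cancellation imposes the equation $(\tilde T_m)^*(h_1) = l(y)$ given in (\ref{12}); and those at order $\epsilon^0$, which after introducing $Q$ give rise to equation (\ref{555}) for $h_2$.

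Next I would solve for $h_1$ and $h_2$. The bilinear form $a[\cdot,\cdot]$ associated with $(\tilde T_m)^*$ satisfies the coercivity and continuity bounds of Lemma~\ref{2.9}, so the Fredholm alternative applies to $(\tilde T_m)^*$ on $L^2(\mathbb{T})$: its kernel is one-dimensional (spanned by a constant). Thus $(\tilde T_m)^* h = g$ is solvable iff $\int_{\mathbb{T}} g\,dy = 0$, and the solution is unique under the normalization $\int_0^1 h\,dy=0$. I would verify this mean-zero condition for $l(y)$ via the computation already given (using the symmetry $c(z)=c(-z)$ after a change of variable $z=y-q$), obtaining $h_1$. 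Then I would define $Q$ by the solvability condition applied to the $\epsilon^0$ equation, which after simplification reproduces (\ref{14}); positivity of $Q$ is clear from its representation as a sum of two nonnegative quadratic forms, and this yields $h_2$.

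The main obstacle—and the step where I would spend most care—is the remainder estimate $\|\phi_\epsilon\|_{L^2}\to 0$. The remainder $\phi_\epsilon$ collects three types of leftovers from the Taylor expansions: (i) the difference $\xi''(x-\epsilon z t)-\xi''(x)$ weighted by $z^2 c(z)\lambda(y-z)m(y-z)$; (ii) an $\epsilon$-order term involving $h_1(x/\epsilon-z)\xi'''(x-\epsilon z t)$; (iii) an analogous term with $h_2$. For (i), I would split the $z$-integral at $|z|\le M$ and $|z|>M$, using uniform continuity of $\xi''$ on the small set and the tail bound $\int_{|z|>M}z^2 c(z)\,dz\to 0$ on the other, choosing $M=\epsilon^{-1/2}$ to balance. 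For (ii) and (iii), the difficulty is that $h_1,h_2$ are only $L^2(\mathbb{T})$ functions of $x/\epsilon$, so I would estimate $\|h_j(\cdot/\epsilon)\xi^{(k)}\|_{L^2}$ by partitioning $\mathbb{R}$ into intervals of length $\epsilon$ and bounding the rapidly-decaying factor $\xi^{(k)}$ uniformly on each, giving a Riemann-sum approximation to $\|h_j\|_{L^2}^2\|\xi^{(k)}\|_{L^2}^2$, as previewed in the excerpt. An extra factor of $\epsilon$ in front then drives these pieces to zero. Assembling all three bounds yields $\|\phi_\epsilon\|_{L^2}\to 0$, completing the proof.
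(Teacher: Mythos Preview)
Your proposal is correct and follows essentially the same route as the paper: the computation preceding Lemma~\ref{2.10} in the paper is exactly the expansion of $(T^\epsilon)^*\xi^\epsilon$ in powers of $\epsilon$ that you describe, with the $\epsilon^{-2}$ cancellation via $(\tilde T)^*m=0$, the construction of $h_1$ from (\ref{12}) and $h_2$ from (\ref{555}) through Lemma~\ref{2.9} and the Fredholm alternative, and the remainder estimate handled by the same $M=\epsilon^{-1/2}$ splitting and the $\epsilon$-interval Riemann-sum bound for $\|h_j(\cdot/\epsilon)\xi^{(k)}\|_{L^2}$. Lemma~\ref{2.10} in the paper is stated as a summary of that preceding computation, so your outline recapitulates its proof accurately.
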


At last, we can show that $\mathbb{E}\l[\r|I_i(u^\epsilon)\l|\r]\rightarrow 0,$ as $\epsilon$ goes to $0.$ That is to say $\widetilde {\pi}$ is the martingale solution for the effective equation (\ref{2}).

\begin{lemma}\label{2.11}
   The stochastic process $H_{\phi,\xi}(t)$ is a continuous $\mathcal{D}_t-$martingale under the probability measure $\widetilde {\pi}$. That is,
   for every bounded, $\mathcal{D}_s-$measurable function $\Phi$ on $K$, we have
   $$\mathbb{E}^{\widetilde {\pi}}[\Phi\{H_{\phi,\xi}(t)-H_{\phi,\xi}(s)\}]=0,\;0\leq s\leq t\leq T.$$
\begin{proof}
 We just need to prove that  $\mathbb{E}[|I_i(u^\epsilon)|]\rightarrow 0$ in (\ref{8}),
  for each $i=1,2,...5.$  From
$$\l|(m^\epsilon u_t^\epsilon,\xi)-(u_t^\epsilon,\xi_\epsilon)\r| \leq ||m^\epsilon u_t^\epsilon||_0||\xi-(m^\epsilon)^{-1}\xi_\epsilon||_0\rightarrow0,$$
we obtain that $\mathbb{E}[|I_1(u^\epsilon)|]\rightarrow 0.$
Similarly, for $i=2, 4,$ we can show that
$\mathbb{E}[|I_i(u^\epsilon)|]\rightarrow 0.$
For $i=3$, we have
$$\mathbb{E}[|I_3(u^\epsilon)|]\rightarrow 0 \leq C\l\|T^0)^*\xi-(T^\epsilon)^*\xi_\epsilon\r\|_{H^{-1}}\mathbb{E}\l[\int_0^T\l\|u_r^\epsilon\r\|_1dr\r]\rightarrow 0.$$
Finally, we will show $\mathbb{E}[|I_5(u^\epsilon)|]\rightarrow 0$ as $\epsilon\rightarrow 0.$
Since $u^\epsilon$ and $\l(m^\epsilon\r)^{-1}\widetilde{u}^{m,\epsilon}$ have the same law on $K$, we deduce that
\[
\begin{split}
&\quad\l(\l(m^\epsilon\r)^{-1}\widetilde{u}_r^{m,\epsilon}\sigma^\epsilon(x),\xi_\epsilon\r)-\l(M^0\l(\l(m^\epsilon\r)^{-1}\widetilde{u}_r^{m,\epsilon}\r),\xi\r)\\
&=\l(\l(m^\epsilon\r)^{-1}\widetilde{u}_r\sigma^\epsilon(x)-\widetilde{u}_r^{m,\epsilon}\sigma^\epsilon(x),\xi_\epsilon\r)+\Big(M^0(\widetilde{u}_r)-
M^0\l((m^\epsilon)^{-1}\widetilde{u}_r^{m,\epsilon})\r),\xi\Big)\\
&+\l(m^\epsilon\widetilde{u}_r\sigma^\epsilon(x),(m^\epsilon)^{-1}\xi_
\epsilon-\xi\r)
\l((m^\epsilon\widetilde{u}_r\sigma^\epsilon(x)-M^0(\widetilde{u}_r),\xi\r)\rightarrow 0,
\end{split}
\]
as $\epsilon\rightarrow 0$ by using the convergence
$$\l((m^\epsilon u\sigma^\epsilon(x)-M^0(u),\xi\r)\rightarrow 0,$$
for each $u\in H_{-\kappa}.$
Hence $\mathbb{E}[|I_5(u^\epsilon)|]\rightarrow 0$. The proof  is complete.
\end{proof}
\end{lemma}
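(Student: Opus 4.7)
The plan is to verify the five bounds $\mathbb{E}[|I_i(u^\epsilon)|]\to 0$ for $i=1,\dots,5$ that appear in the decomposition (8). Combined with the identity $\mathbb{E}^{\pi^{\epsilon}}[\Phi\{H_{\phi,\xi_\epsilon}^{T^\epsilon,M^\epsilon}(t)-H_{\phi,\xi_\epsilon}^{T^\epsilon,M^\epsilon}(s)\}]=0$, which follows from the It\^o formula applied to the strong solution $u^\epsilon$ of \eqref{1}, this yields $\mathbb{E}^{\pi^{\epsilon_l}}[\Phi\{H_{\phi,\xi}^{\epsilon_l}(t)-H_{\phi,\xi}^{\epsilon_l}(s)\}]\to 0$; passing to the limit on the left using the Skorokhod coupling from Lemma \ref{2.7} then gives the martingale identity for $\widetilde{\pi}$.

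For the ``algebraic'' terms $I_1$, $I_2$, and $I_4$, the key inequality is
\[
|(m^\epsilon u_t^\epsilon,\xi)-(u_t^\epsilon,\xi_\epsilon)|=|(u_t^\epsilon,m^\epsilon\xi-\xi_\epsilon)|\leq \|u_t^\epsilon\|_{L^2}\|\xi-(m^\epsilon)^{-1}\xi_\epsilon\|_{L^2}.
\]
From the explicit form \eqref{15} of $\xi_\epsilon$, we have $\xi-(m^\epsilon)^{-1}\xi_\epsilon=-\epsilon h_1(\cdot/\epsilon)\xi'-\epsilon^2 h_2(\cdot/\epsilon)\xi''$, whose $L^2$ norm is $O(\epsilon)$ by a standard periodic-function estimate (as already used in the proof of Lemma \ref{2.10}). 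Combined with the uniform bound $\sup_\epsilon\mathbb{E}[\sup_{t\leq T}\|u_t^\epsilon\|_{L^2}^4]<\infty$ from Lemma \ref{2.3}, the boundedness of $\phi'$, $\phi''$, and the Lipschitz continuity of $\phi$, $\phi'$ on the range of $(m^\epsilon X,\xi)$ (which is bounded in probability), the desired convergence follows after a dominated convergence argument.

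The term $I_3$ is where Lemma \ref{2.10} enters directly: since $(T^\epsilon)^*\xi_\epsilon=Q\xi''+\phi_\epsilon$ and $(T^0)^*\xi=Q\xi''$, we have $(T^0)^*\xi-(T^\epsilon)^*\xi_\epsilon=-\phi_\epsilon\to 0$ in $L^2(\mathbb{R})\hookrightarrow H^{-1}$. Hence
\[
\mathbb{E}[|I_3(u^\epsilon)|]\leq \|\phi'\|_\infty\|\phi_\epsilon\|_{H^{-1}}\,\mathbb{E}\!\left[\int_0^T\|u_r^\epsilon\|_{H^1}\,dr\right]\to 0,
\]
where the $H^1$-integral is finite and uniformly bounded in $\epsilon$ by Lemma \ref{2.3} together with Cauchy--Schwarz.

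The term $I_5$ is the main obstacle and requires the ergodic/homogenization input at the level of the multiplicative noise. The point is that for each fixed $u\in H_{-\lambda}$, the periodic averaging $\sigma(x/\epsilon)m(x/\epsilon)\rightharpoonup \int_{\mathbb{T}}\sigma(y)m(y)\,dy$ in $L^\infty_{\rm w^*}$ gives $(m^\epsilon u\sigma^\epsilon,\xi)\to (M^0 u,\xi)$. To upgrade this to convergence of the squared functional appearing in $I_5$, I will use the Skorokhod coupling of Lemma \ref{2.7}: the fact that $(m^{\epsilon_l})^{-1}\widetilde{u}^{m,\epsilon_l}\to\widetilde{u}$ in $(K,\mathcal{T}_2)$ almost surely, together with the identity in law between $u^{\epsilon_l}$ and $(m^{\epsilon_l})^{-1}\widetilde{u}^{m,\epsilon_l}$, allows the decomposition shown in the lemma statement (adding and subtracting $(m^\epsilon\widetilde{u}_r\sigma^\epsilon,\xi)$ and exploiting the weak convergence $m^\epsilon u\sigma^\epsilon\rightharpoonup M^0 u$ pointwise). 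Dominated convergence, justified by the uniform $L^2$ moments of $\widetilde{u}^{m,\epsilon}$ and boundedness of $\phi''$, then closes the argument.
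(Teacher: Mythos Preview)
Your proposal is correct and follows essentially the same approach as the paper's proof: both reduce the martingale identity to showing $\mathbb{E}[|I_i(u^\epsilon)|]\to 0$ for $i=1,\dots,5$, handle $I_1,I_2,I_4$ via the inequality $|(m^\epsilon u_t^\epsilon,\xi)-(u_t^\epsilon,\xi_\epsilon)|\leq \|m^\epsilon u_t^\epsilon\|_{L^2}\|\xi-(m^\epsilon)^{-1}\xi_\epsilon\|_{L^2}$, invoke Lemma~\ref{2.10} for $I_3$, and treat $I_5$ by the Skorokhod coupling from Lemma~\ref{2.7} together with the weak convergence $(m^\epsilon u\,\sigma^\epsilon,\xi)\to(M^0 u,\xi)$. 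Your write-up supplies somewhat more explicit justification (e.g.\ the form of $\xi-(m^\epsilon)^{-1}\xi_\epsilon$ and the role of the uniform moment bounds from Lemma~\ref{2.3}), but the argument is the same.
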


\bigskip

 \textbf{Proof of Theorem $\ref{1.1}$}   \\

  Since the uniqueness of the martingale solution for the effective equation (\ref{2}) and the conclusion in  Lemma \ref{2.6}, we know that $\pi^{m,\epsilon}\Rightarrow \pi$ in $\l(K, \mathcal{T}_1\r).$ On the other hand, $\pi^\epsilon,$  the law of the solution of
   the heterogeneous equation (\ref{1}), goes to $\pi$ in $(K, \mathcal{T}_2)$ by Lemma \ref{2.7}. At the same time, by Lemma \ref{2.11}, we know that  $\pi$ is the martingale solution for the effective equation (\ref{2}). In conclusion Theorem \ref{1.1} is proved.


%
%
%
%

\section{Proof of Theorem \ref{27}}\label{s5}

The proof of \textbf{Theorem \ref{27}} is also divided into two steps: the tightness and the limit law.

\subsection{Tightness}
We denote $\pi_1^{m_1,\epsilon}$   the probability measure induced by $m_1^\epsilon v^\epsilon$. We can infer the  next lemma.
\begin{lemma}\label{33}
      If $v^\epsilon$ is a solution of heterogeneous equation (\ref{21}), and $\pi_1^{m_1,\epsilon}$ is the probability measure induced by $m_1^\epsilon v^\epsilon$. Then,
      $\{\pi^{m_1,\epsilon}:\epsilon>0\}$ is tight in $(K_1,\mathcal{T}_1)$.
    \end{lemma}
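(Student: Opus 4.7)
The strategy mirrors the proof of Lemma \ref{2.5} in Part I, adapted to the fractional setting. I will establish two $\epsilon$-uniform estimates --- a fourth moment/fractional energy bound and a fourth moment time-H\"older bound against smooth test functions --- and then conclude via Kolmogorov's tightness criterion together with compactness of the fractional Sobolev embedding $H^{\alpha/2}\hookrightarrow H^{-\alpha/2}_{-\lambda}$.

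First, following Lemma \ref{2.3} I would apply It\^o's formula to $\|v_t^\epsilon\|_{L^2}^2$ weighted by the invariant density $m_1^\epsilon(x):=m_1(x/\epsilon)$. After integrating in $x$, the identity $\tilde L^* m_1=0$ cancels the $\epsilon^{-\alpha}$ contributions from $L^\epsilon$ and from the $-\epsilon^{-\alpha}e(x/\epsilon)$ potential in $F^\epsilon$, while the centering assumption (b) on $d$ turns the $\epsilon^{-(\alpha-1)} d(x/\epsilon) v^\epsilon \partial_x v^\epsilon$ term into a total $x$-derivative of a $1$-periodic object, which integrates to $0$. What survives is, at leading order, a nonpositive fractional dissipation
\[
-\int_0^t\!\int_{\mathbb{R}} \delta^\alpha\!\bigl(\tfrac{x}{\epsilon}\bigr) m_1\!\bigl(\tfrac{x}{\epsilon}\bigr)\, v_s^\epsilon(x)\,(-\Delta)^{\alpha/2} v_s^\epsilon(x)\,dx\,ds,
\]
which controls $\int_0^t [v_s^\epsilon]_{H^{\alpha/2}}^2 ds$ from below since $\delta^\alpha m_1$ is bounded above and below, plus lower-order bounded-coefficient terms absorbable by Gronwall. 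Combining with Burkholder--Davis--Gundy as in Lemma \ref{2.3}, I expect
\[
\sup_\epsilon \mathbb{E}\bigl[\sup_{0\le t\le T}\|v_t^\epsilon\|_{L^2}^4\bigr] + \sup_\epsilon \mathbb{E}\Bigl[\Bigl(\int_0^T \|v_t^\epsilon\|_{H^{\alpha/2}}^2\, dt\Bigr)^{2}\Bigr] \le C(1+\|v_0\|_{L^2}^4).
\]

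Second, imitating Lemma \ref{2.4}, for each $\varphi\in C_c^\infty(\mathbb{R})$ I would split $(m_1^\epsilon v_t^\epsilon - m_1^\epsilon v_s^\epsilon,\varphi)$ into its drift part and its Brownian part. The drift pairing $\int_s^t (V_{m_1}^\epsilon v_r^\epsilon,\varphi)\,dr$ still carries $\epsilon^{-\alpha}$ and $\epsilon^{-(\alpha-1)}$ prefactors that must be tamed by dualizing onto $\varphi$: moving $m_1^\epsilon$ onto the test function, the $\epsilon^{-\alpha}$ piece is killed by $\tilde L^* m_1=0$ and the $\epsilon^{-(\alpha-1)} d$-drift is absorbed into a $\gamma^\epsilon$-type lower order term via a corrector $\chi_1$ satisfying $\tilde L\chi_1+d=0$ on $\mathbb{T}$ (existence by a Fredholm argument parallel to Lemma \ref{2.9}, with $-\delta^\alpha(-\Delta)^{\alpha/2}$ playing the role of the elliptic part). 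One then obtains a bound of the form $|(V_{m_1}^\epsilon v_r^\epsilon,\varphi)|\le C\|\varphi\|_{H^{\alpha/2+2}} \|v_r^\epsilon\|_{H^{\alpha/2}}$, and BDG on the martingale part, combined with the energy bound from step one, yields
\[
\sup_\epsilon \mathbb{E}\bigl[(m_1^\epsilon v_t^\epsilon - m_1^\epsilon v_s^\epsilon,\varphi)^4\bigr] \le C\,\|\varphi\|_{H^{\alpha/2+2}}^4\, |t-s|^2\,(1+\|v_0\|_{L^2}^4).
\]

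With these two ingredients in hand, Kolmogorov's tightness criterion gives tightness of the real-valued processes $\{(m_1^\epsilon v^\epsilon,\varphi);\,\epsilon>0\}$ in $C([0,T];\mathbb{R})$ for every $\varphi\in C_c^\infty(\mathbb{R})$. Using that $H\hookrightarrow H_{-\lambda}^{-\alpha/2}$ is compact (Rellich for weighted fractional Sobolev spaces) together with the uniform $L^\infty(0,T;L^2)$ and $L^2(0,T;H^{\alpha/2})$ bounds, one upgrades this to tightness of $\{\pi_1^{m_1,\epsilon};\,\epsilon>0\}$ on $(K_1,\mathcal{T}_1)$ exactly as in Section $4$ of \cite{31}. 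The \textbf{main obstacle} is step two: verifying that the fast-scale correctors needed to neutralize the $\epsilon^{-\alpha}$ and $\epsilon^{-(\alpha-1)}$ singular coefficients exist with enough regularity on $\mathbb{T}$, since the non-integrable kernel $|y|^{-(1+\alpha)}$ makes $L^\epsilon$ genuinely unbounded on $L^2$ and forces the Fredholm/coercivity estimates to be set in fractional Sobolev spaces rather than the $H^1$ framework of Part I.
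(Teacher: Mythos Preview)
Your overall architecture --- energy estimate, equicontinuity against test functions, Kolmogorov, then compact embedding --- is exactly the paper's route: Lemma~\ref{33} is obtained from Lemmas~\ref{30} and~\ref{31} just as Lemma~\ref{2.5} follows from Lemmas~\ref{2.3} and~\ref{2.4}. Where you diverge is in the mechanism for the equicontinuity step, and the divergence stems from a misconception about the scaling.

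The nonlocal part of $L^\epsilon$ carries \emph{no} singular $\epsilon$-prefactor: as the Introduction emphasizes, the structural difference with Part~I is precisely that $B^\epsilon$ had an $\epsilon^{-3}$ in front of the integral kernel, whereas here $-\delta^\alpha(x/\epsilon)(-\Delta)^{\alpha/2}$ has only an oscillating bounded coefficient. So your description of ``$\epsilon^{-\alpha}$ contributions from $L^\epsilon$'' being cancelled by $\tilde L^* m_1=0$ is off; there is nothing of that order coming from the nonlocal term, and $\tilde L$ does not involve $e$, so $\tilde L^* m_1=0$ cannot touch the $-\epsilon^{-\alpha}e(x/\epsilon)$ potential either. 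Likewise, the centering condition $\int d\,m_1=0$ does not by itself reduce $\epsilon^{-(\alpha-1)} d^{m_1,\epsilon} v'\,v$ to a total $x$-derivative that integrates to zero on $\mathbb{R}$.

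Because the nonlocal part is already of order one in $\epsilon$, the paper's equicontinuity proof (Lemma~\ref{31}) does not introduce correctors at all. Instead it observes that $L_{m_1}^\epsilon$, having the form $-\delta^\alpha(x/\epsilon)m_1(x/\epsilon)(-\Delta)^{\alpha/2}+\ldots$ with $\delta^\alpha m_1$ bounded above and below, satisfies $(L_{m_1}^\epsilon v,v)\le 0$ and $(\lambda^\epsilon v,v)\ge -C_1[v]^2_{H^{\alpha/2}}$ for its eigenvalues $\lambda^\epsilon$, and then bounds $\bigl(\int_s^t (L_{m_1}^\epsilon v_r^\epsilon,\varphi)\,dr\bigr)^4$ directly through this spectral comparability together with the $L^2(0,T;H^{\alpha/2})$ energy bound from Lemma~\ref{30}. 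The ``main obstacle'' you flag --- building fractional correctors with enough regularity to absorb $\epsilon^{-\alpha}$ singularities --- is therefore largely self-imposed; the paper bypasses it by exploiting the order-one nature of the nonlocal operator. Your corrector idea is closer in spirit to what the paper does later (equations~(\ref{38}),~(\ref{123}),~(\ref{124})) when \emph{identifying the limit}, but for tightness the paper's spectral shortcut is both simpler and avoids the Fredholm theory in $H^{\alpha/2}(\mathbb{T})$ that you would otherwise have to develop.
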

Thanks to the Lemma \ref{2.3}, we can obtain several uniform estimates concerning the solution $v^\epsilon$ for the original heterogenous system (\ref{22}).
\begin{lemma}\label{30}
    Let $v^\epsilon$\;be a solution of heterogeneous equation (\ref{21}) with initial value $v_0\in H^{\alpha/2}$. Then there exists a positive constant C,  independent of $\epsilon$, such that
    \begin{equation}
    \sup_\epsilon \mathbb{E}\l[\sup_{0\leq t\leq T}\l\|v^\epsilon_t\r\|_0^4\r]+\sup_\epsilon \mathbb{E}\l[\l(\int_0^T\l\|v^\epsilon_t\r\|^2_{H^{\alpha/2}} dt\r)^2\r]\leq C(1+\l\|v_0\r\|^4_0).
    \end{equation}
    \end{lemma}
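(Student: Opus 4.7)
The plan is to mimic the structure of Lemma \ref{2.3} as closely as possible, with the fractional Laplacian replacing the ordinary one and the invariant density $m_1$ of $\tilde L^*$ playing the role that $m$ played in Part I. I would first assume $v_0\in C_c^\infty(\mathbb{R})$ and work with a smooth version of $v^\epsilon$, recovering the general case $v_0\in H^{\alpha/2}$ at the end by density. Applying It\^o's formula to $(v^\epsilon(t,x))^2$ and then multiplying by $m_1^\epsilon(x)=m_1(x/\epsilon)$ and integrating in $x$ yields the key identity
\[
(m_1^\epsilon v_t^\epsilon,v_t^\epsilon) = (m_1^\epsilon v_0,v_0) + 2\int_0^t (V^\epsilon_{m_1} v_s^\epsilon, v_s^\epsilon)\,ds + \int_0^t (m_1^\epsilon (\sigma^\epsilon)^2 v_s^\epsilon,v_s^\epsilon)\,ds + 2\int_0^t(m_1^\epsilon \sigma^\epsilon v_s^\epsilon,v_s^\epsilon)\,dW_s,
\]
where $V^\epsilon_{m_1} = m_1^\epsilon V^\epsilon$. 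The role of the equation $\tilde L^* m_1 = 0$ and the centering condition (b) is to force the singular terms of order $\epsilon^{-\alpha}$ and $\epsilon^{-(\alpha-1)}$ inside $(V^\epsilon_{m_1} v_s^\epsilon, v_s^\epsilon)$ to cancel exactly, leaving a dissipative nonlocal quadratic form.

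The central estimate I would establish, paralleling the inequality for $(T^\epsilon_m u,u)$ in Lemma \ref{2.3}, is a bound of the type
\[
(V^\epsilon_{m_1} v_s^\epsilon, v_s^\epsilon) \leq -c_1 [v_s^\epsilon]_{H^{\alpha/2}}^2 + C_0 \|v_s^\epsilon\|_{L^2}^2,
\]
using the classical identity $\int_\mathbb{R} w\,(-\Delta)^{\alpha/2}w\,dx = \tfrac12 [w]_{H^{\alpha/2}}^2$ together with the positive lower bound of $\delta^\alpha m_1$. From this and a Gronwall argument applied to the resulting stochastic inequality one gets $\sup_\epsilon\sup_{0\le t\le T}\mathbb{E}[\|v^\epsilon_t\|_{L^2}^2] \leq C(1+\|v_0\|_{L^2}^2)$. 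To promote this to the fourth moment I would apply It\^o's formula to $(m_1^\epsilon v^\epsilon_t,v^\epsilon_t)^2$, take the supremum in $t$ and expectations, and control the stochastic integral by the Burkholder--Davis--Gundy inequality exactly as in Lemma \ref{2.3}, absorbing the $\sup_s\|v^\epsilon_s\|_{L^2}^4$ term into the left-hand side. Squaring the energy inequality and applying BDG once more delivers the second moment bound on $\int_0^T [v^\epsilon_s]_{H^{\alpha/2}}^2 ds$.

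The main obstacle is the handling of the singular drift $\epsilon^{-(\alpha-1)} d(x/\epsilon) u'$ and the singular zero-order term $-\epsilon^{-\alpha} e(x/\epsilon) u$. Unlike in Part I, where a clean integration by parts and the relation $(\tilde T)^* m = 0$ symmetrized the operator, here the non-integrable $\alpha$-stable kernel means that pointwise integration by parts is not directly available and must be interpreted through the nonlocal divergence $\mathcal D$ and its adjoint $\mathcal D^*$ introduced in the paper. The cleanest route is to rewrite $-\delta^\alpha(x/\epsilon)(-\Delta)^{\alpha/2} = 2\mathcal D\mathcal D^*$ with the appropriate antisymmetric kernel and then use the adjoint relation $\mathcal D(m_1^\epsilon \cdot) = m_1^\epsilon \mathcal D(\cdot) + (\text{commutator})$, so that the singular prefactors are absorbed exactly when the cell problem $\tilde L^* m_1 = 0$ is invoked. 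Once this delicate algebra is in place, the remaining analysis (Gronwall, BDG, density) is routine and mirrors Lemma \ref{2.3} line by line.
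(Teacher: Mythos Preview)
Your overall strategy matches the paper's: it gives no detailed proof of this lemma and simply points back to Lemma~\ref{2.3}, so mimicking that argument with $m_1$ in place of $m$ and the $H^{\alpha/2}$-seminorm in place of the $H^1$-norm is exactly what is intended. Your computation that $\tilde L^*m_1=0$ kills the singular contribution coming from $L^\epsilon$ is correct: writing $(u(x)-u(y))u(x)=\tfrac12(u^2(x)-u^2(y))+\tfrac12(u(x)-u(y))^2$ and scaling, the $L^\epsilon$-part of $(V^\epsilon_{m_1}v,v)$ equals $-\tfrac12\epsilon^{-\alpha}\big((\tilde L^*m_1)(x/\epsilon),v^2\big)$ plus a nonpositive dissipative term controlling $[v]^2_{H^{\alpha/2}}$.

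There is, however, a genuine gap in your cancellation claim. The singular zero-order term $-\epsilon^{-\alpha}e(x/\epsilon)v$ from $F^\epsilon$ contributes
\[
-\epsilon^{-\alpha}\big((m_1e)(x/\epsilon),(v^\epsilon)^2\big)
\]
to the energy identity, and this is \emph{not} absorbed by $\tilde L^*m_1=0$, since $e$ does not appear in $\tilde L$ at all. Nor does the centering condition (b) help: that condition concerns $d$, not $e$, and in any case (just as condition (v) in Part~I) it is used for solving cell problems in the limit identification, not for the a~priori energy estimate. Without an additional hypothesis --- for instance $e\ge 0$, which would make the term nonpositive and hence droppable --- the uniform-in-$\epsilon$ bound does not follow from your argument as stated. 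The paper itself is silent on this point, so you are not deviating from it; but if you want a complete proof you must either impose a sign condition on $e$ or supply a separate mechanism for this term.
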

    Next, we shall show the equicontinuity of $\{(m_1^\epsilon v^\epsilon,\varphi)\}_{\epsilon>0}$ for each $\varphi\in C_c^\infty(\mathbb{R}).$

    \begin{lemma}\label{31}
    Let $v^\epsilon$  be the  solution of  the heterogeneous equation (\ref{21}) with initial value $v_0\in H$. Then, for every $\varphi\in C_c^\infty(\mathbb{R})$, there exists
    a positive constant $C$ such that
    $$\sup_\epsilon \mathbb{E}\l[\l(m_1^\epsilon v_t^\epsilon-m_1^\epsilon v_s^\epsilon,\varphi\r)^4\r]\leq C\l|t-s\r|^2\l\|v_0\r\|^4_0,$$
    for all $s,t\in [0,T]$.
    \begin{proof}
    By the definition of $v^\epsilon,$ we can see
    \begin{equation*}
      \begin{split}
     \l(m_1^\epsilon v^\epsilon_t-m_1^\epsilon v_s^\epsilon,\varphi\r)^4&\leq 27\l(\int_s^t\l\langle F_{m_1}^\epsilon v^\epsilon_r,\varphi\r\rangle dr\r)^4+27\l(\int_s^t\l\langle L_{m_1}^\epsilon v^\epsilon_r,\varphi\r\rangle^4dr\r)\\
     &+27\l(\int_s^t\l(m_1^\epsilon v^\epsilon \sigma^\epsilon,\varphi\r)dW_r\r)^4.
    \end{split}
    \end{equation*}
Then, For the operators $F_{m_1}^\epsilon$ and $L_{m_1}^\epsilon,$ we have $$\l\langle F_{m_1}^\epsilon v^\epsilon_r,\varphi \r\rangle \leq C_0\l\|v^\epsilon_r\r\|_0\l\|\varphi\r\|_1,$$
$$\l\langle L_{m_1}^\epsilon v,v\r\rangle=-\frac{1}{2}\big(\mathcal{D^*}v(x,y),\delta^{\alpha,\epsilon}(x)m_1^\epsilon(x)\mathcal{D^*}v(x,y)\big)_{L^2(\mathbb{R}\times\mathbb{R})}\leq0.$$
We use $\lambda^\epsilon$ to denote the eigenvalues of $L_{m_1}^\epsilon$, from the fact that the function $\delta,m$ are bounded, we know that there exists a positive constant $C_1,C_2,C_3$ such that
$$(\lambda^\epsilon v,v)\geq-C_1\|v\|^2_{H^{\alpha/2}}.$$
Then,
\begin{equation*}
  \begin{split}
   \mathbb{E}\l(\int_s^t\l\langle L_{m_1}^\epsilon v^\epsilon_r,\varphi\r\rangle dr\r)^4&\leq\mathbb{E}\l(\int_s^t-\lambda^\epsilon \l\|v^\epsilon_r\r\|_{H^{\alpha/2}}\l\|\varphi\r\|_0dr\r)^4\\
   &\leq\l(\lambda^\epsilon\r)^4||\varphi||_0^4\mathbb{E}\l(\int_s^t\l\|v^\epsilon_r\r\|_{H^{\alpha/2}}dr\r)^4\\
   &\leq(\lambda^\epsilon)^4C_1^2||\varphi||_0^4\mathbb{E}\l(\int_s^t\l\|v^\epsilon_r\r\|_{H^{\alpha/2}}dr\r)^4\\
   &\leq C_3\l\|\varphi\r\|_0^4\l|t-s\r|^2\l\|v_0\r\|^4_0.
  \end{split}
\end{equation*}
We can conclude that
    $$\sup_\epsilon \mathbb{E}\l[(m_1^\epsilon v_t^\epsilon-m_1^\epsilon v_s^\epsilon,\varphi)^4\r]\leq C\l|t-s\r|^2\l\|v_0\r\|^4_0.$$
    \end{proof}
    \end{lemma}
    We can also infer the next three lemmas in the
    same way we used in Lemma \ref{2.6}, Lemma \ref{2.7} and Lemma \ref{2.8}.
    \begin{lemma}\label{34}
      There exists a subsequence $\epsilon_k \rightarrow 0$ as $k\rightarrow\infty$, and a probability measure
      $ \widetilde {\pi_1}$ on $K_1$ such that
      $$\pi^{m_1,\epsilon_k}\Rightarrow\widetilde{\pi_1} \quad in \quad (K_1,\mathcal{T}_1).$$
    \end{lemma}

    \begin{lemma}\label{35}
     Let $\epsilon_k$ be the subsequence in Lemma \ref{2.6} and $\pi_1^{\epsilon_k}$ be the probability measure induced by $v^{\epsilon_k}$. Then
      $$\pi_1^{\epsilon_k}\Rightarrow\widetilde {\pi_1} \quad in \quad (K_1,\mathcal{T}_2),$$ as $k\rightarrow\infty$.
      \end{lemma}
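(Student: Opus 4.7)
The plan is to mirror the strategy used for Lemma \ref{2.7} in Part I, replacing the Laplacian-based corrector with a fractional one adapted to the exponent $\alpha/2$. Starting from the weak convergence $\pi_1^{m_1,\epsilon_k}\Rightarrow\widetilde{\pi_1}$ in $(K_1,\mathcal{T}_1)$ provided by Lemma \ref{34}, I would invoke Skorokhod's representation theorem to obtain, on an auxiliary probability space $(\widetilde{\Omega},\widetilde{\mathcal{F}},\widetilde{\mathbb{P}})$, $K_1$-valued random variables $\widetilde{v}^{m_1,\epsilon_k}$ and $\widetilde{v}$ with laws $\pi_1^{m_1,\epsilon_k}$ and $\widetilde{\pi_1}$ respectively, such that $\widetilde{v}^{m_1,\epsilon_k}\to\widetilde{v}$ in $(K_1,\mathcal{T}_1)$, $\widetilde{\mathbb{P}}$-almost surely. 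Since $(m_1^{\epsilon_k})^{-1}\widetilde{v}^{m_1,\epsilon_k}$ has the same law as $v^{\epsilon_k}$ on $K_1$, it suffices to establish
\[
\widetilde{\mathbb{E}}\Big[\int_0^T\big\|(m_1^{\epsilon_k})^{-1}\widetilde{v}^{m_1,\epsilon_k}_t-\widetilde{v}_t\big\|^2_{H_{-\lambda}}\,dt\Big]\longrightarrow 0.
\]

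The next step is to split the integrand as
\[
(m_1^{\epsilon_k})^{-1}\widetilde{v}^{m_1,\epsilon_k}_t-\widetilde{v}_t
=(m_1^{\epsilon_k})^{-1}\widetilde{v}^{m_1,\epsilon_k}_t(1-m_1^{\epsilon_k})+\big(\widetilde{v}^{m_1,\epsilon_k}_t-\widetilde{v}_t\big),
\]
and bound both pieces in $H^{-\alpha/2}_{-\lambda}$. The second piece tends to $0$ in $L^2(0,T;H^{-\alpha/2}_{-\lambda})$ by the a.s.\ convergence in $(K_1,\mathcal{T}_1)$ together with the uniform moment estimate from Lemma \ref{30} and bounded convergence. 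For the first piece, I would construct a periodic corrector $\zeta_1\in H^{\alpha/2}(\mathbb{T})$ solving
\[
(-\Delta)^{\alpha/2}\zeta_1(y)=1-m_1(y),\qquad \int_0^1\zeta_1(y)\,dy=0,
\]
whose solvability follows from $\int_{\mathbb{T}}(1-m_1)\,dy=0$ (via the normalization in (\ref{25})) and the Fredholm structure of $(-\Delta)^{\alpha/2}$ on $\mathbb{T}$. Setting $\zeta_1^{\epsilon_k}(x)=\zeta_1(x/\epsilon_k)$, we have $1-m_1^{\epsilon_k}=\epsilon_k^{\alpha}(-\Delta)^{\alpha/2}\zeta_1^{\epsilon_k}$, and a duality estimate against test functions in $H^{\alpha/2}_{\lambda}$ gives
\[
\big\|v_t^{\epsilon_k}(1-m_1^{\epsilon_k})\big\|_{H^{-\alpha/2}_{-\lambda}}\leq C\,\epsilon_k^{\alpha/2}\,\|v_t^{\epsilon_k}\|_{H^{\alpha/2}}.
\]

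Squaring, integrating in time, and invoking the uniform bound $\sup_{\epsilon}\mathbb{E}\big[\int_0^T\|v^{\epsilon}_t\|^2_{H^{\alpha/2}}\,dt\big]<\infty$ from Lemma \ref{30}, this term is $O(\epsilon_k^{\alpha})\to 0$. Combining both estimates yields $L^2(0,T;H^{-\alpha/2}_{-\lambda})$ convergence to $0$. Finally, since the embedding $H^{\alpha/2}\hookrightarrow H_{-\lambda}$ is compact and $H_{-\lambda}\hookrightarrow H^{-\alpha/2}_{-\lambda}$ is continuous, the Lions--Ehrling interpolation inequality
\[
|\upsilon|_{H_{-\lambda}}\leq \rho\,|\upsilon|_{H^{\alpha/2}}+C(\rho)\,|\upsilon|_{H^{-\alpha/2}_{-\lambda}}\qquad(\upsilon\in H^{\alpha/2}),
\]
together with the uniform $L^2(0,T;H^{\alpha/2})$ bound, upgrades the convergence to $L^2(0,T;H_{-\lambda})$, i.e.\ in the topology $\mathcal{T}_2$, delivering $\pi_1^{\epsilon_k}\Rightarrow\widetilde{\pi_1}$ in $(K_1,\mathcal{T}_2)$.

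The main obstacle I anticipate is the corrector construction: verifying that the fractional Poisson problem $(-\Delta)^{\alpha/2}\zeta_1=1-m_1$ on $\mathbb{T}$ admits a smooth enough solution so that $\zeta_1^{\epsilon_k}$ generates a well-defined multiplier from $H^{\alpha/2}_{\lambda}$ to $L^2$ with the $\epsilon_k^{\alpha}$ scaling. Once this is handled, the remainder of the argument is a direct transcription of Lemma \ref{2.7} with the weighted fractional Sobolev scale in place of the integer-order one.
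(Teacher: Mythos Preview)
Your proposal is correct and follows exactly the route the paper intends: the paper does not give a separate proof of this lemma but simply states that it is inferred ``in the same way we used in Lemma~\ref{2.7}'', and your argument is precisely the natural transcription of that proof to the fractional scale $H^{\alpha/2}$, $H_{-\lambda}$, $H^{-\alpha/2}_{-\lambda}$ with the corrector equation $\Delta\zeta=1-m$ replaced by $(-\Delta)^{\alpha/2}\zeta_1=1-m_1$. Your proof proposal is in fact more detailed than the paper's own treatment; the obstacle you flag (regularity and multiplier behavior of the fractional corrector $\zeta_1^{\epsilon_k}$) is the only nontrivial point left implicit by the paper as well.
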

      \begin{lemma}\label{36}
    The effective equation (\ref{21}) has at most one martingale solution on $(K_1,\mathcal{D})$.
    \end{lemma}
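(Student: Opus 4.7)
The plan is to mirror the strategy of Lemma \ref{2.8}, adapted to the fractional setting of Part II. First I would pass from the weighted space $K_1$ to an unweighted framework via the transformation $\widetilde{v}_t := v_t e^{-\lambda\theta}$, which by the very definition of $H^{\alpha/2}_\lambda$ and $H^{-\alpha/2}_{-\lambda}$ turns the question of uniqueness on $K_1 = C(0,T;H^{-\alpha/2}_{-\lambda}) \cap L^2(0,T;H_{-\lambda})$ into a uniqueness question on the unweighted Bochner spaces $C(0,T;H^{-\alpha/2}) \cap L^2(0,T;L^2)$.

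Next I would write down the equation satisfied by $\widetilde{v}_t$. The driving operator $V^0$ has constant coefficients (they are the averages $\int \delta^\alpha m_1$, $\int g\,m_1$, $\int f\,m_1$), so conjugation by $e^{-\lambda\theta}$ produces the same fractional-Laplacian leading term plus lower-order correction terms coming from derivatives of the smooth weight $\theta$. The multiplicative noise coefficient $M^0 v = v \cdot \int \sigma m_1$ commutes with the weight, so the noise term in the equation for $\widetilde{v}_t$ remains linear and multiplicative. Once this transformed equation is in place, I would take the difference $\widetilde{w}_t = \widetilde{v}_t - \widetilde{v}'_t$ of two putative martingale solutions (with the same initial data, by the martingale problem condition) and apply It\^{o}'s formula to $\|\widetilde{w}_t\|_{L^2}^2$.

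The key coercivity input is the nonpositivity of the fractional Laplacian, $(-(-\Delta)^{\alpha/2} u, u) \leq 0$, expressed equivalently through the Gagliardo seminorm $[u]_{H^{\alpha/2}}^2$. Together with the boundedness of the first-order drift, the zero-order term, and the multiplicative noise, this yields a closed Gronwall-type estimate $\mathbb{E}\|\widetilde{w}_t\|_{L^2}^2 \leq C \int_0^t \mathbb{E}\|\widetilde{w}_s\|_{L^2}^2 \, ds$, from which $\widetilde{w}_t \equiv 0$ almost surely in $L^2$, hence $v_t = v'_t$ in $H^{-\alpha/2}_{-\lambda}$ for all $t \in [0,T]$, proving pathwise uniqueness and a fortiori uniqueness of the martingale solution.

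The main obstacle I anticipate is the commutator estimate between the exponential weight $e^{-\lambda\theta}$ and the nonlocal operator $(-\Delta)^{\alpha/2}$. Unlike the local Laplacian case handled in Lemma \ref{2.8}, where $e^{-\lambda\theta}\Delta(e^{\lambda\theta}\cdot) - \Delta$ is an honest first-order differential operator, here one must show that $e^{-\lambda\theta}(-\Delta)^{\alpha/2}(e^{\lambda\theta}u) - (-\Delta)^{\alpha/2} u$ defines a bounded perturbation of order strictly less than $\alpha$, so that its contribution to the energy estimate can be absorbed by the coercive $[u]_{H^{\alpha/2}}^2$ term (or at worst handled by Gronwall). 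Since $\theta$ is smooth and asymptotically linear, a direct kernel computation using $|e^{\lambda\theta(x)} - e^{\lambda\theta(y)}| \leq C|x-y|\,e^{\lambda\max(\theta(x),\theta(y))}$ together with a splitting of the singular integral into near-diagonal and far-diagonal parts should suffice. Once this commutator is controlled, the remainder of the argument is a routine adaptation of the Part I proof.
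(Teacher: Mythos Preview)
Your proposal is correct and follows the same strategy as the paper: the paper gives no separate argument for this lemma, simply stating it can be inferred ``in the same way'' as Lemma~\ref{2.8}, which in turn consists of the weight transformation $\widetilde{u}_t = u_t e^{-\lambda\theta}$ followed by an appeal to the pathwise uniqueness underlying Lemma~\ref{2.2}. Your write-up is in fact considerably more detailed than the paper's, and the fractional commutator issue you flag---that $e^{-\lambda\theta}(-\Delta)^{\alpha/2}(e^{\lambda\theta}\,\cdot\,)-(-\Delta)^{\alpha/2}$ must be shown to be a lower-order perturbation---is a genuine technical point the paper does not address at all.
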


      \subsection{Identification of the Limit Law}
    Recall the calculation in Subsection \ref{ss42}, in order to obtain that $\widetilde {\pi_1}$ is the martingale solution for the effective equation (\ref{21}), we only need to show the convergence of $(V^\epsilon)^*\xi^\epsilon$ as $\epsilon$ goes to $0$.

     We will construct a family of test functions $\xi^{\epsilon}\in C_c^\infty(\mathbb{R})$ as follows:
      \begin{equation}\label{37}
 \xi^{\epsilon}(x)=m_1\l(\frac{x}{\epsilon}\r)\l(\xi(x)+\epsilon h_{3}\l(\frac{x}{\epsilon}\r)\xi'(x)\r),
 \end{equation}
 where the function $h_3$ is the solution of the following equations:
 \begin{equation}\label{38}
  \begin{cases}
   (\widetilde{L_{m_1}})^*(h_3)(\eta)=p(\eta)m_1(\eta),\;\eta\in\mathbb{T},\\
   \int_0^1h_3(\eta)d\eta=1.
 \end{cases}
\end{equation}

Then we have
\[
    \begin{split}
    \l\langle(V^\epsilon)^*\xi^\epsilon,\psi\r\rangle&\rightarrow \bigg(\int_{\mathbb{T}}\delta^\alpha(\eta)m_1(\eta)d\eta\cdot\l(-(-\Delta)^{\alpha/2}\xi(x)\r)+\xi'(x)\int_{\mathbb{T}}g(\eta)m_1(\eta)d\eta\\
    &+\xi(x)\int_{\mathbb{T}}f(\eta)m_1(\eta)d\eta,\psi\bigg),
    \end{split}
     \]
which will be proved in Appendix D.

Moreover, Theorem \ref{27} is proved through the way in Lemma \ref{2.11}.

\section{Application to Data Assimilation}\label{s6}
In this section, the results in Theorem \ref{1.1} and Theorem \ref{27} will be used for the effective reduction for nonlocal Zakai equations of nonlinear data assimilation system with $\a$-stable l\'{e}vy fluctuations.
We can make a transformation on the solution of Zakai equation such that the operator in the new equation satisfies the assumption condition. In this way we get the effective reduction for stochastic differential equation with a new type of operator.
\subsection{Application of Theorem \ref{1.1}}

We consider the following nonlinear data assimilation problem
\begin{equation}\label{16}
\begin{cases}
  dx_t=\frac{1}{\epsilon}b\l(\frac{x}{\epsilon}\r)dt+\sigma_1\l(\frac{x}{\epsilon}\r)dw_t+dL^{\epsilon}_t,\\
  dy_t=\sigma\l(\frac{x}{\epsilon}\r)dt+dW_t,
\end{cases}
\end{equation}
where $x_t$ is the system state (or signal) and $y_t$ is the observation.  Here
$w^{\epsilon}_t, W^{\epsilon}_t$ are  mutually independent one dimensional Brownian motions, and
$L^{\epsilon}_t$ is a L\'{e}vy process with generator
$$(B^\epsilon u)(x)=\frac{1}
  {\epsilon^3}\lambda\l(\frac{x}{\epsilon}\r)\int_{\mathbb{R}}c\l(\frac{x-y}{\eps}\r)(u(y)-u(x))dy.$$
Then the Zakai equation for the conditional probability density function $u$ of  the data assimilation system (\ref{16}) is the following nonlocal stochastic partial differential equation
\begin{equation}\label{111}
\begin{cases}
$$du^\epsilon(t,x)=(T^\epsilon)^* u^\epsilon(t,x)dt+u^\epsilon(t,x)\sigma\l(\frac{x}{\epsilon}\r)^2dt+u^\epsilon(t,x)\sigma\l(\frac{x}{\epsilon}\r)dW_t,\\
u^\epsilon(0,x)=u_0(x).$$
\end{cases}
\end{equation}

\begin{theorem}
  The family of laws $\{\pi^{\epsilon}; \epsilon>0\}$ induced by the solutions of
  equation (\ref{111}) converges weakly in $(K,\mathcal{T}_1)$ to the law of the
  following stochastic differential equation

  \begin{equation}
  \begin{cases}
    du(t,x)=\widehat{T}^0u(t,x)dt+\int_{\mathbb{T}}m(\eta)\sigma(\eta)^2d\eta\cdot u(t,x)dt+M^0u(t,x)dW_t,\\
    u(0,x)=u_0(x),
  \end{cases}
  \end{equation}
  where $M^0$ is the same operator as (\ref{17}), and
  $$(\widehat{T}^0u)(x)=\l(Q+\int_{\mathbb{T}}2P\widehat{\chi}'d\eta\r)u''(x),$$
 where $Q$ is the same constant as (\ref{14}), 
$P(\eta)=\frac{1}{2}\int_{\mathbb{R}}z^2c(z)\l(\lambda^m(\eta-z)-\lambda^m(\eta)\r)dz,$
and $\widehat{\chi}(\eta)$ is the solution of equation (\ref{mmnm}).

 \medskip
\begin{proof}

 In fact,  set $\widehat{u}^\epsilon=(m^\epsilon)^{-1}u^\epsilon$. Then the  system  (\ref{111}) can be rewritten as
    \begin{equation}
    \begin{cases}
      d\widehat{u}^\epsilon(t,x)=\widehat{T}^\epsilon\widehat{u}^\epsilon(t,x)dt+\widehat{u}^\epsilon(t,x)\sigma\l(\frac{x}{\epsilon}\r)^2dt+\sigma\l(\frac{x}{\epsilon}\r)\widehat{u}^\epsilon(t,x)dW_t,\\
      \widehat{u}^\epsilon(0)=(m^\epsilon)^{-1}u_0,
    \end{cases}
    \end{equation}
    where
    \begin{small}
    \[
    \begin{split}
    &(\widehat{T}^\epsilon u)(x)=\frac{1}{m^\epsilon(x)}\l(A_m^\epsilon\r)^*u(x)+\frac{1}{\epsilon^3m^\epsilon(x)}\int_{\mathbb{R}}c\l(\frac{x-y}{\epsilon}\r)
    \l(\lambda^m\l(\frac{y}{\epsilon}\r)u(y)\!-\!\lambda^m(\frac{x}{\epsilon})u(x)\r)dy\\
    &=a\l(\frac{x}{\epsilon}\r)u''-\frac{1}{\epsilon}\l[b\l(\frac{x}{\epsilon}\r)-\frac{2\l(am\r)'\l(\frac{x}{\epsilon}\r)}{m\l(\frac{x}{\epsilon}\r)}
  \r]u'+\frac{1}{\epsilon^2m^\epsilon(x)}\l((am)''(\frac{x}{\epsilon})-(bm)'(\frac{x}{\epsilon})\r)u(x)\\
  &\quad+\frac{1}{\epsilon^3m^\epsilon(x)}\int_{\mathbb{R}}c\l(\frac{x-y}{\epsilon}\r)
\l(\lambda^m\l(\frac{y}{\epsilon}\r)u(y)-\lambda^m\l(\frac{x}{\epsilon}\r)u(x)\r)dy.
  \end{split}
\]
\end{small}

 For the second term of the operator $\widehat{T}^\epsilon,$
     \begin{small}
     \[
    \begin{split}
    &\epsilon^{-3}(m^\epsilon)^{-1}\!\int_{\mathbb{R}}\!c\l(\frac{x-y}{\epsilon}\r)
    \l(\lambda^m\l(\frac{y}{\epsilon}\r)u(y)\!-\!\lambda^m(\frac{x}{\epsilon})u(x)\r)dy\\
    &=\epsilon^{-2}(m^\epsilon)^{-1}\int_{\mathbb{R}}c(z)\l[u(x)\l(\lambda^m\l(\frac{x}{\epsilon}-z\r)-\lambda^m(\frac{x}{\epsilon})\r)
    +\lambda^m\l(\frac{x}{\epsilon}-z\r)(u(y)-u(x))\r]dz\\
    &=\epsilon^{-2}(m^\epsilon)^{-1}\widetilde{B}^*m\cdot u(x)+B^\epsilon u(x)\\
    &\quad+\epsilon^{-2}(m^\epsilon)^{-1}\int_{\mathbb{R}}c(z)
    \l(\lambda^m(\frac{x}{\epsilon}-z)-\lambda^m(\frac{x}{\epsilon})\r)\l(-\epsilon zu'(x)+(\epsilon z)^2u''(x)+o(\epsilon^2)\r)dz.\\
    \end{split}
    \]
    \end{small}

    we conclude that $$(\widehat{T}^\epsilon u)(x)=B^\epsilon u(x)+\widehat{a}^\epsilon(x)u''(x)+\frac{1}{\epsilon}\widehat{b}^\epsilon(x)u'+o(\epsilon),$$
    where
    $$\widehat{a}^\epsilon(x)=a^\epsilon(x)+\frac{P^\epsilon(x)}{m^\epsilon(x)}$$
    $$\widehat{b}^\epsilon(x)=2\frac{(am)'(\frac{x}{\epsilon})}{m^\epsilon(x)}-b^\epsilon(x)-\frac{Q^\epsilon(x)}{m^\epsilon(x)}$$
    $$Q^\epsilon(x)=\int_{\mathbb{R}}zc(z)\lambda^m(\frac{x}{\epsilon}-z)dz$$
    $$P^\epsilon(x)=\frac{1}{2}\int_{\mathbb{R}}z^2c(z)\l(\lambda^m(\frac{x}{\epsilon}-z)-\lambda^m(\frac{x}{\epsilon})\r)dz$$

    It is easy to see that  $\widehat{T}^\epsilon$ satisfy all  Assumptions $(\romannumeral 1)-(\romannumeral 5)$. Therefore, Theorem \ref{1.1} deduce that the family of laws induced by $u^\epsilon=
    m^\epsilon\widehat{u}^\epsilon$ converges weakly in $(K,\mathcal{T}_1)$ to the law $\pi$ induced by the solution of the following effective   Zakai equation:
    \[
    \begin{cases}
      du(t,x)=\widehat{T}^0u(t,x)dt+\int_{\mathbb{T}}m(\eta)\sigma^2(\eta)d\eta\cdot u(t,x)dt+M^0u(t,x)dW_t,\\
      u(0,x)=u_0(x),
    \end{cases}
    \]
    where
    $$\l(\widehat{T}^0u\r)(x)=Q_1u''(x),$$
    \begin{equation*}
    \begin{split}
    Q_1&=\int_{\mathbb{T}}\widehat{a}(\eta)m(\eta)\l(\widehat{\chi}'(\eta)+1\r)^2dy
    +\frac{1}{2}\int_{\mathbb{T}}\int_{\mathbb{R}}c(\eta-q)\lambda(q)m(q)\big[(\eta-q)\\
    &\quad+\l(\widehat{\chi}(\eta)-\widehat{\chi}(q)\r)\big]^2dydq,
   \end{split}
    \end{equation*}
    and $\widehat{\chi}(\eta)$ is the solution of
    \begin{equation}\label{mmnm}
    \begin{cases}
      \widetilde{T}_1\widehat{\chi}(\eta)+\widehat{b}(\eta)=0, \;\eta\in\mathbb{T},\\
      \int_{\mathbb{T}}\widehat{\chi}(\eta)m(\eta)d\eta=0,
    \end{cases}
    \end{equation}
where $\widetilde{T}_1u=\widetilde{B} u(\eta)+\widehat{a}(\eta)u''(\eta)+\widehat{b}(\eta)u'(\eta).$

We note that $\widehat{\chi}$ coincides with the solution of equation (\ref{12}) up to constant.

First, we can show
\begin{equation}
  \begin{split}
    -\int_{\mathbb{T}}&\widehat{b}^m(\eta)\widehat{\chi}(\eta)d\eta=\l(\widetilde{B}^m\widehat{\chi}+\widehat{a}^m\widehat{\chi}''
    +\widehat{b}^m\widehat{\chi}', \widehat{\chi}\r)\\
    &=(\frac{1}{2}m,\widetilde{B}\widehat{\chi}^2)-\frac{1}{2}\int_{\mathbb{T}}\lambda^m(\eta)\int_{\mathbb{R}}c(x-\eta)(\widehat{\chi}(x)-\widehat
    {\chi}(\eta))dxd\eta\\
    &\quad-\int_{\mathbb{T}}\widehat{a}^m\l(\widehat{\chi}'\r)^2d\eta+\frac{1}{2}\int_{\mathbb{T}}\l(\widehat{a}^m\r)''\widehat{\chi}^2d\eta
    -\frac{1}{2}\int_{\mathbb{T}}(\widehat{b}^m)'\widehat{\chi}^2d\eta\\
    &=-\frac{1}{2}\int_{\mathbb{T}}\lambda^m(\eta)\int_{\mathbb{R}}c(x-\eta)(\widehat{\chi}(x)-\widehat
    {\chi}(\eta))dxd\eta-\int_{\mathbb{T}}\widehat{a}^m\l(\widehat{\chi}'\r)^2d\eta,
  \end{split}
\end{equation}
in view of the equality $\widetilde{T}_1^*m(\eta)=0.$ By the fact that $P'(\eta)=\widehat{b}^m(\eta)-b^m(\eta),$ we have
\begin{small}
\begin{equation}
  \begin{split}
  -\int_{\mathbb{T}}&\widehat{b}^m(\eta)\widehat{\chi}(\eta)d\eta=-\frac{1}{2}\int_{\mathbb{T}}\lambda^m(\eta)\int_{\mathbb{R}}c(x-\eta)(\widehat{\chi}(x)-\widehat
    {\chi}(\eta))dxd\eta+\int_{\mathbb{T}}a^md\eta\\
    &-\int_{\mathbb{T}}2P'\widehat{\chi}d\eta-\int_{\mathbb{T}}\widehat{a}^m\l(\widehat{\chi}'+1\r)^2d\eta-\int_{\mathbb{T}}Q(\eta)\widehat{\chi}d\eta.
   \end{split}
\end{equation}
\end{small}
Thus, the relation $Q_1=Q+\int_{\mathbb{T}}2P\widehat{\chi}'d\eta$ follows from the fact that $\widehat{\chi}(\eta)-h_1(\eta)$ is a constant and
$\int_{\mathbb{T}}\widehat{b}(\eta)m(\eta)=0,$ and the equality (\ref{08908}).
\end{proof}
\end{theorem}
\bigskip

\subsection{Application of Theorem \ref{27}}
In this section, we present an  application of Theorem \ref{27}.
We consider the nonlinear data assimilation problem (\ref{nn})
\begin{equation*}
\begin{cases}
  dx_t=\frac{1}{\epsilon}p\l(\frac{x_t}{\epsilon}\r)dt+\delta{\l(\frac{x_t}{\epsilon}\r)}dL^\alpha_t,\\
  dy_t=\sigma\l(\frac{x_t}{\epsilon}\r)dt+dW_t,
\end{cases}
\end{equation*}
then the \textbf{nonlocal Zakai equation} for the conditional probability density function $v$ of  the data assimilation system (\ref{nn}) is the  following :
\begin{small}
\begin{equation}\label{35367}
\begin{cases}
$$dv^\epsilon(t,x)=\l(L^\epsilon\r)^* v^\epsilon(t,x)dt+v^\epsilon(t,x)\sigma\l(\frac{x}{\epsilon}\r)^2dt+v^\epsilon(t,x)\sigma\l(\frac{x}{\epsilon}\r)dW_t,\\
v^\epsilon(0,x)=v_0(x).$$
\end{cases}
\end{equation}
\end{small}

\begin{theorem}
  The family of laws $\{\pi^{\epsilon}_1; \epsilon>0\}$ induced by the solutions of
  equation (\ref{35367}) converges weakly in $(K_1,\mathcal{T}_1)$ to the law of the
  following stochastic differential equation
  \begin{small}
  \begin{equation}
  \begin{cases}
    dv(t,x)=\widehat{L}^0v(t,x)dt+\int_{\mathbb{T}}m_1(\eta)\sigma(\eta)^2d\eta\cdot v(t,x)dt+M^0v(t,x)dW_t,\\
    v(0,x)=v_0(x),
  \end{cases}
  \end{equation}
  \end{small}
  where $M^0$ is the same operator as (\ref{17}), and
  $$(\widehat{L}^0v)(x)=\int_\mathbb{T}\delta^\alpha(\eta)m_1(\eta)d\eta\cdot(-(-\Delta)^{\alpha/2})v(x).$$

\begin{proof}
 In fact,  set $\widehat{v}^\epsilon=(m_1^\epsilon)^{-1}v^\epsilon$. Then the  system  (\ref{nn}) can be rewritten as
    \begin{equation}
    \begin{cases}
      d\widehat{v}^\epsilon(t,x)=\widehat{L}^\epsilon\widehat{v}^\epsilon(t,x)dt+\widehat{v}^\epsilon(t,x)\sigma\l(\frac{x}{\epsilon}\r)^2dt+\sigma\l(\frac{x}{\epsilon}\r)\widehat{v}^\epsilon(t,x)dW_t,\\
      \widehat{v}^\epsilon(0)=(m_1^\epsilon)^{-1}v_0,
    \end{cases}
    \end{equation}
    where
    \begin{small}
    \begin{equation*}
    \begin{split}
    (\widehat{L}^\epsilon v)(x)
    &=-\frac{1}{\epsilon^{\a-1}}p\l(\frac{x}{\epsilon}\r)v'+(m_1^\epsilon)^{-1}\int_{\mathbb{R}}c\l(\frac{x-y}{\epsilon}\r)
    \l[\delta_1^{m_1}\l(\frac{y}{\epsilon}\r)(v(y)-v(x)\r]dy\\
    &=-\delta_1^\epsilon(x)(-\Delta)^{\a/2}v(x)-\frac{1}{\epsilon^{\a-1}}\l(p({\frac{x}{\epsilon}})+(m_1^\epsilon)^{-1}\int_{\mathbb{R}}\delta_1^m(\frac{x}{\epsilon}-z)\frac
    {z}{|z|^{1+\a}}dz\r)v'(x).
    \end{split}
    \end{equation*}
    \end{small}
in view of the equality $\tilde{L}^* m_1(\eta)=0$


    Then $\widehat{L}^\epsilon$ satisfy the Assumptions $(a), (b)$. Using Theorem \ref{27}, we infer that the family of laws induced by $v^\epsilon=
    m_1^\epsilon\widehat{v}^\epsilon$ converges weakly in $(K_1,\mathcal{T}_1)$ to the law $\pi_1$ induced by the solution of the following effective   Zakai equation:
    \[
    \begin{cases}
      dv(t,x)=V^0v(t,x)dt+M^0v(t,x)dW_t,\\
      v(0,x)=v_0(x),
    \end{cases}
    \]
    where
    $$V^0v(t,x)=\widehat{L}^0v(t,x)+\int_{\mathbb{T}}m_1(\eta)\sigma^2(\eta)d\eta\cdot v(t,x).$$
    We can see that the above effective Zakai equation   coincides with the equation (\ref{22}).
    \end{proof}
    \end{theorem}

\begin{acknowledgements}
 We are grateful to  Qiao Huang and Ao Zhang for helpful comments.
\end{acknowledgements}

\begin{appendices}
 \section*{Appendix A: Proof of Lemma \ref{2.1}}
\begin{proof}
  Note that $A^\epsilon$ is the infinitesimal generator of a $C_0$ semigroup $S(t)$ on $H$, as known in Stewart \cite{Eb}.  Moreover,
  \begin{equation}\label{4}
  \begin{split}
  &\int_{0}^{T}\left\|B^\epsilon u^\epsilon_s(\cdot)\right\|_0+\left\|\l(u^\epsilon_s(\cdot)\r)\sigma(\frac
  {\cdot}{\epsilon})\right\|_0^2ds\\
  &\leq C_2\int_{0}^{T}\left\|\int_{\mathbb{R}}c(\frac{\cdot-y}{\epsilon})u^\epsilon_s(y)dy\right\|_0ds\\
  &+ C_2\int_{0}^{T}\left\|u^\epsilon_s(\cdot)\right\|_0ds\left\|\int_{\mathbb{R}}c(\frac{\cdot-y}{\epsilon})dy\right\|_0\\
  &+\int_{0}^{T}\left\|(u^\epsilon_s(\cdot))^2\sigma(\frac
  {\cdot}{\epsilon})^2\right\|_0ds.
  \end{split}
  \end{equation}
 Combined with the uniform estimates in Lemma \ref{2.3}, we conclude that
 \begin{small}
 \begin{equation*}
 \begin{split}
  \left\|\int_{\mathbb{R}}c(\frac{\cdot-y}{\epsilon})u^\epsilon_s(y)dy\right\|_0^2&\leq
  \int_{\mathbb{R}}c(y)dy\int_{\mathbb{R}}c(q)dq\int_{\mathbb{R}}u^\epsilon_s(x+\epsilon y)u^\epsilon_s(x+\epsilon q)dx\\
  &\leq a_1^2\left\|u_s^\epsilon\right\|^2_0<\infty.
 \end{split}
 \end{equation*}
 \end{small}

  So the right hand side of  (\ref{4}) is finite. Hence the   equation (\ref{1}) has a solution given by
  $$u^\epsilon_t(x)\!=\!S(t)u_0(x)\!+\!\int_0^tS(t\!-\!s)B^\epsilon u^\epsilon_s(x)ds\!+\!\int_0^t
  S(t\!-\!s)u^\epsilon_s(x)\sigma\l(\frac{x}{\epsilon}\r)dW_s.$$
  The mild solution of the equation is unique (Theorem $3.5$ of \cite{GL}).
\end{proof}

\section*{Appendix B:   Uniqueness of  $h_{1}(\eta) \;\text{and} \;h_{2}(\eta)$}
\begin{proof}
 We define the bilinear form:
\[
\begin{split}
a[u,v]&=\int_{\mathbb{T}}\l[\int_{\mathbb{R}}c(\eta-q)\big(\lambda^m(q)u(q)-\lambda^m(\eta)u(\eta)\big)
dq\r] v(\eta)d\eta\\
\quad&+\int_{\mathbb{T}}(a^m(\eta)u(\eta))''v(\eta)d\eta-\int_{\mathbb{T}}(b^m(\eta)u(\eta))'v(\eta)d\eta.
\end{split}
\]
for every $u, v\in H^1.$

At first, we verify the conditions of the Fredholm alternative theorem. We want to show that
there exist positive constants $\nu,\mu,$ such that:
  $$\left| a[u,v]\right|\leq\nu\Vert u\Vert_1\Vert v\Vert_1,$$
  and
  $$\frac{\kappa_1}{2}\Vert u\Vert_1^2\leq a[u,u]+\mu \Vert u\Vert_0^2,$$
  for every $u,v\in H^1(\mathbb{T}).$
Note that
\begin{equation}\label{10}
\begin{split}
\l| a[u,v]\r| & \leq\l|\int_{\mathbb{T}}\l[\int_{\mathbb{R}}c(\eta-q)\l(\lambda^m(q)u(q)-\lambda^m(\eta)u(\eta)\r)
dq \r]v(\eta)d\eta\r|\\&+\l|\int_{\mathbb{T}}(a^m(\eta)u(\eta))''v(\eta)d\eta\r|+\l|\int_{\mathbb{T}}(b^m(\eta)u(\eta))'v(\eta)d\eta\r|.
\end{split}
\end{equation}
For the first term of (\ref{10}),
\[
\begin{split}
\l|\int_{\mathbb{T}}\l[\int_{\mathbb{R}}c(\eta-q)\lambda^m(q)u(q)dq\r] v(\eta)d\eta-\int_{\mathbb{T}}\l[\int_{\mathbb{R}}c(\eta-q)\lambda^m(\eta)u(\eta))dq\r] v(\eta)d\eta\r|\\
\leq\l[\int_{\mathbb{T}}\l(\int_{\mathbb{R}}c(\eta-q)\lambda^m(q)u(q)dq\r)^2 d\eta\r]^{\frac{1}{2}}
\l(\int_{\mathbb{T}}v(\eta)^2d\eta\r)^{\frac{1}{2}}+a_1C_2\int_{\mathbb{T}}u(\eta)v(\eta)d\eta.\\
\end{split}
\]
In fact,
\begin{equation}
\begin{split}
&\int_{\mathbb{T}}\l(\int_{\mathbb{R}}c(\eta-q)\lambda^m(q)u(q)dq\r)^2d\eta\\
&=\int_{\mathbb{T}}\l(\int_{\mathbb{R}}c(\eta-q)\lambda^m(q)u(q)dq\r)
\l(\int_{\mathbb{R}}c(\eta-q)\lambda^m(q)u(q)dq\r)d\eta\\
&\leq\frac{C^2_2}{\delta^2}\int_{\mathbb{R}}c(q)dq\int_{\mathbb{R}}c(q)dq\int_{\mathbb{T}}u(\eta+q)u(\eta+q)
d\eta\leq a_2^2\frac{C^2_2}{\delta^2}\l\| u\r\|_0^2.
\end{split}
\end{equation}
Combining with (\ref{10}), we   conclude that
$$\l| a[u,v]\r|\leq C_3 \Vert u\Vert_0\Vert v\Vert_0+C_4 (\Vert u\Vert_0\Vert v'\Vert_0
+\Vert u'\Vert_0\Vert v'\Vert_0)+C_5\Vert u\Vert_0\Vert v'\Vert_0$$
$$\leq\nu\Vert u\Vert_1\Vert v\Vert_1.$$
We now use the assumptions to infer that
\begin{equation}\label{11}
\begin{split}
\kappa_1\Vert u'\Vert_0&\leq-\int_{\mathbb{T}}(a(\eta)u(\eta))''u(\eta)d\eta=a[u,u]-\int_{\mathbb{T}}(b(\eta)u(\eta))'u(\eta)d\eta\\
&\quad+\int_{\mathbb{T}}\l[\int_{\mathbb{R}}c(\eta-q)\big(\lambda(q)u(q)-\lambda
(\eta)u(\eta)\big)dq\r] u(\eta)d\eta\\&\leq a[u,u]+\int_{\mathbb{T}}\l(\Vert b\Vert_{\infty}\vert u'\vert \cdot\vert u\vert+C_7\vert u\vert^2\r)d\eta.
\end{split}
\end{equation}
Now we make use of the Young's inequality
$$ab\leq\delta_1 a^2+\frac{1}{4\delta_1}b^2,\; \text{for \;every}  \;\delta_1>0.$$
Using this in the second term on the right hand side of (\ref{11}), we obtain
$$\int_{\mathbb{T}}\vert u'\vert\cdot\vert u\vert d\eta\leq \delta_1 \Vert u'\Vert_0^2+\frac{1}{4\delta_1}
\Vert u\Vert_0^2.$$
We choose $\delta_1$, so that $$\kappa_1-\Vert b\Vert_{\infty}\delta_1=\frac{\kappa_1}{2}.$$
Thus $$\frac{\kappa_1}{2}\Vert u'\Vert_0^2\leq a[u,u]+\frac{1}{4\delta_1}\Vert b\Vert_{\infty}
\Vert u\Vert_0^2+C_7\Vert u\Vert_0^2.$$
We now add $\frac{\kappa_1}{2}\Vert u\Vert_0^2$ on the both sides of the preceding inequality to obtain
$$\frac{\kappa_1}{2}\Vert u\Vert_1^2\leq a[u,u]+\mu \Vert u\Vert_0^2,$$ with
$$\mu=\frac{1}{4\delta_1}\Vert b\Vert_{\infty}+C_7+\frac{\kappa_1}{2}.$$

Next we consider the resolvent operator$$R_{\l(\tilde{T}_m\r)^*}(\lambda)=\l((\tilde{T}_m)^*+\lambda I\r)^{-1},$$
where $I$ stands for the identity operator and $\lambda>0$.  Note that this operator is compact. For $\lambda$ sufficiently large, consequently, Fredholm theorem can be used for $R_{\l(\tilde{T}_m\r)^*}(\lambda)$. From the fact that the Fredholm alternative for $R_{\l(\tilde{T}_m\r)^*}(\lambda)$
implies the Fredholm alternative for $\tilde{T}_m^*,$ Fredholm theorem can be used for $\tilde{T}_m^*$(Lemma $7.11$ of \cite{Pav}).
Moreover, it is easy to see that $Ker \l(\tilde{T}_m\r)^*=\{C\}$, where $C$ is a constant.
Then we want to show the solvability condition:
\begin{equation}\label{13}
\int_{\mathbb{T}}l(\eta)d\eta=0.
\end{equation}
We take $z=\eta-q.$ Noting the fact that
$$\int_{\mathbb{R}}\int_{\mathbb{T}}q c(\eta-q)m(\eta)\lambda(\eta)d\eta dq=
\int_{\mathbb{R}}\int_{\mathbb{T}}\eta c(\eta-q)m(q)\lambda(q)d\eta dq,$$
we infer that
\[
\begin{split}
\int_{\mathbb{T}}l(\eta)d\eta&=\int_{\mathbb{R}}\int_{\mathbb{T}}qc(\eta-q)\big(m(\eta)\lambda(\eta)-m(q)\lambda(q)\big)d\eta dq\\
&\quad+\int_{\mathbb{T}}b(\eta)m(\eta)d\eta-\int_{\mathbb{T}}2(a(\eta)m(\eta))'d\eta\\
&=\int_{\mathbb{T}}\eta\int_{\mathbb{R}}c(q-\eta)\big(m(q)\lambda(q)-m(\eta)\lambda(\eta)\big)dqd\eta\\
&\quad+\int_{\mathbb{T}}b(\eta)m(\eta)d\eta-\int_{\mathbb{T}}2(a(\eta)m(\eta))'d\eta\\
&=\int_{\mathbb{T}}\eta\l[-(a(\eta)m(\eta))''+(b(\eta)m(\eta))'\r]d\eta\\
&\quad+\int_{\mathbb{T}}b(\eta)m(\eta)d\eta-\int_{\mathbb{T}}2(a(\eta)m(\eta))'d\eta\\
&=0.
\end{split}
\]
The solvability condition $\int_{\mathbb{T}}l_1(\eta)d\eta$ will be verified in the Appendix C.
Thus, the solution $h_1(\eta)$ and $h_2(\eta)$ is existence and uniqueness.
\end{proof}
\section*{Appendix C: Proof of Lemma \ref{2.11}}
\begin{proof}
Substituting $\xi_\epsilon$ defined in $(\ref{15})$ into $(T^\epsilon)^*\xi_\epsilon:$
   \begin{small}
  \[
  \begin{split}
  (T^\epsilon)^*(\xi^\epsilon)(x)&=\frac{1}{\epsilon^3}\int_{\mathbb{R}}c\l(\frac{x-y}{\epsilon}\r)
  \bigg\{\lambda\l(\frac{y}{\epsilon}\r)m\l(\frac{y}{\epsilon}\r)\l[\xi(y)\!+\!\epsilon h_{1}\l(\frac{y}{\epsilon}\r)\xi'(y)\!+\!\epsilon^2 h_2 \l(\frac{y}{\epsilon}\r)\xi''(y)\r]\\
  &\quad-\lambda\l(\frac{x}{\epsilon}\r)m\l(\frac{x}{\epsilon}\r)\l[\xi(x)+\epsilon h_{1}\l(\frac{x}{\epsilon}\r)\xi'(x)+\epsilon^2 h_2 \l(\frac{x}{\epsilon}\r)\xi''(x)\r]\bigg\}\\
  &\quad+\l\{a^m\l(\frac{x}{\epsilon}\r)\l[\xi(x)+\epsilon h_{1}\l(\frac{x}{\epsilon}\r)\xi'(x)+\epsilon^2 h_2 \l(\frac{x}{\epsilon}\r)\xi''(x)\r]\r\}''\\
 &\quad-\frac{1}{\epsilon}\l\{b^m\l(\frac{x}{\epsilon}\r)\l[\xi(x)+\epsilon h_{1}\l(\frac{x}{\epsilon}\r)\xi'(x)+\epsilon^2 h_2 \l(\frac{x}{\epsilon}\r)\xi''(x)\r]\r\}'dy.
\end{split}
\]
\end{small}
   First of all, we consider the term $(B^\epsilon)^*(\xi^\epsilon)(x)$ ,
  \[
  \begin{split}
    (B^\epsilon)^*(\xi^\epsilon)(x)&=\frac{1}{\epsilon^2}\int_{\mathbb{R}}c(z)\bigg\{\lambda\l(\frac{x}{\epsilon}-z\r)m\l(\frac{x}{\epsilon}-z\r)\Big[\xi(x-\epsilon z)\\
    &\quad+\epsilon h_{1}\l(\frac{x}{\epsilon}-z\r)\xi'(x-\epsilon z)
    +\epsilon^2 h_2\l(\frac{x}{\epsilon}-z\r)\xi''(x-\epsilon z)\Big]\\
    &\quad-\lambda\l(\frac{x}{\epsilon}\r)m\l(\frac{x}{\epsilon}\r)\l[\xi(x)+\epsilon h_{1}\l(\frac{x}{\epsilon}\r)\xi'(x)+\epsilon^2 h_2 \l(\frac{x}{\epsilon}\r)\xi''(x)\r]\bigg\}dz.
  \end{split}
   \]
Using the following identities based on the integral form of remainder term in the Taylor expansion
$$\xi(y)=\xi(x)+\int_{0}^{1}\frac{\partial}{\partial t}\xi(x+(y-x)t)dt=\xi(x)+\int_{0}^{1}\xi'(x+(y-x)t)\cdot(y-x)dt,$$
and
$$\xi(y)=\xi(x)+\xi'(x)(y-x)+\int_{0}^{1}\xi''(x+(y-x)t)(y-x)\cdot(y-x)(1-t)dt,$$
which is valid for each $x,y\in\mathbb{R}$, we conclude that
\[
\begin{split}
(B^\epsilon)^*(\xi^\epsilon)(x)&=
\frac{1}{\epsilon^2}\int_{\mathbb{R}}c(z)
\bigg\{\lambda\l(\frac{x}{\epsilon}-z\r)m\l(\frac{x}{\epsilon}-z\r)\bigg[\xi(x)-\epsilon z\xi'(x)\\
&\quad+\epsilon^2\int_{0}^{1}\xi''(x-\epsilon zt)\cdot z^2 (1-t)dt+\epsilon h_{1}\l(\frac{x}{\epsilon}-z\r)\Big(\xi'(x)-\epsilon z\xi''(x)\\
&\quad+\epsilon^2\int_{0}^{1}\xi'''(x-\epsilon zt)\cdot z^2 (1-t)dt\Big)+\epsilon^2 h_2 \l(\frac{x}{\epsilon}-z\r)\xi''(x-\epsilon z)\bigg]\\
&\quad-\lambda\l(\frac{x}{\epsilon}\r)m\l(\frac{x}{\epsilon}\r)\l[\xi(x)+\epsilon h_{1}\l(\frac{x}{\epsilon}\r)\xi'(x)+\epsilon^2 h_2 \l(\frac{x}{\epsilon}\r)\xi''(x)\r]\bigg\}dz.
\end{split}
\]
Collecting  the equal power  terms with $(A^\epsilon)^* \xi^\epsilon$, we obtain
\begin{small}
\begin{equation}\label{365}
\begin{split}
&(T^\epsilon)^*(\xi^\epsilon)(x)\\
&= \frac{1}{\epsilon^2}\xi(x)\bigg\{\int_{\mathbb{R}}c(z)\bigg[\lambda(\frac{x}{\epsilon}-z)m(\frac{x}{\epsilon}-z)-\lambda(\frac{x}{\epsilon})m(\frac{x}{\epsilon})\bigg]dz+\l(am\r)''(\frac{x}{\epsilon})-\l(bm\r)'(\frac{x}{\epsilon})\bigg\}\\
&\quad+\frac{1}{\epsilon}\xi'(x)\bigg\{\int_{\mathbb{R}}c(z)\bigg[\l(-z+h_1(\frac{x}{\epsilon}-z)\r)\lambda(\frac{x}{\epsilon}-z)m(\frac{x}{\epsilon}-z)-\lambda(\frac{x}{\epsilon})m(\frac{x}{\epsilon})h_1(\frac{x}{\epsilon})\bigg]dz\\
&\quad+2\l(am\r)'(\frac{x}{\epsilon})+\l(amh_{1}\r)''(\frac{x}{\epsilon})-b(\frac{x}{\epsilon})m(\frac{x}{\epsilon})-\l(bmh_{1}\r)'(\frac{x}{\epsilon})\bigg\}\\
&\quad\!+\!\xi''(x)\bigg\{\int_\mathbb{R}c(z)\bigg[\lambda(\frac{x}{\epsilon}\!-\!z)m(\frac{x}{\epsilon}\!-\!z)(\frac{1}{2}z^2\!-\!zh_1(\frac{x}{\epsilon}\!-\!z)+h_2(\frac{x}{\epsilon}-z))\\
&\quad-\lambda(\frac{x}{\epsilon})m(\frac{x}{\epsilon})h_2(\frac{x}{\epsilon})\bigg]dz
+a(\frac{x}{\epsilon})m(\frac{x}{\epsilon})+2(amh_1)'(\frac{x}{\epsilon})+(amh_2)''(\frac{x}{\epsilon})-(bmh_2)'(\frac{x}{\epsilon})\\
&\quad-h_1(\frac{x}{\epsilon})b(\frac{x}{\epsilon})m(\frac{x}{\epsilon})\bigg\}
+\phi_\epsilon(x),
\end{split}
\end{equation}
\end{small}
with
\begin{small}
\[
\begin{split}
\phi_\epsilon(x)&=\frac{1}{\epsilon^2}\int_\mathbb{R}c(z)\bigg\{\epsilon^2\int_0^1\lambda(\frac{x}{\epsilon}-z)m(\frac{x}{\epsilon}-z)\xi''(x-\epsilon zt)z^2(1-t)dt\\&\quad- \frac{\epsilon^2}{2}\lambda(\frac{x}{\epsilon}-z)
m(\frac{x}{\epsilon}-z)\xi''(x)z^2
+\epsilon^3 h_1(\frac{x}{\epsilon}-z)\int_0^1 \xi'''(x-\epsilon zt)z^2(1-t)dt\\&\quad- \epsilon^3 h_2(\frac{x}{\epsilon}-z)\int_0^1\xi'''(x-\epsilon zt)zdt\bigg\}dz.
\end{split}
\]
\end{small}


Denote $\eta=\frac{x}{\epsilon}$ a variable on the period: $\eta\in \mathbb{T}$. we collect all the terms of the order $\epsilon^{-2}$ in (\ref{365}) and equate them to $0.$
\begin{small}
 $$\int_{\mathbb{R}}c(z)\big[\lambda(\eta-z)m(\eta-z)-\lambda(\eta)m(\eta)\big]dz
 +(a(\eta)m(\eta))''-(b(\eta)m(\eta))'=(\tilde{T})^*m(\eta)=0.$$
\end{small}
From the fact that $(\tilde{T}_m)^*(h_1)(\eta)=l(\eta)$, for the terms of order $\epsilon^{-1}$, we have
\begin{small}
\begin{equation}
\begin{array}{rl}\label{29}
0=&\displaystyle\int_{\mathbb{R}}c(z)\Big[\big(-z+h_1(\eta-z)\big)\lambda(\eta-z)m(\eta-z)-\lambda(\eta)m(\eta)h_1(\eta))\Big]dz\\[2ex]&+2(a(\eta)m(\eta))'
+\l(a(\eta)m(\eta)h_{1}(\eta)\r)''-b(\eta)m(\eta)-(b(\eta)m(\eta)h_{1}(\eta))'.
\end{array}
\end{equation}
\end{small}

At last, we collect the term of the order $\varepsilon^0.$
Our goal is to find the function $h_2$, such that the sum of these terms will be equal to $T^0\xi=Q \xi''$ with $Q>0$.
Then we have
\begin{equation}\label{555}
\begin{split}
&(\tilde{T}_m)^*(h_2)(\eta)\\
&=-Q+\int_{\mathbb{R}}c(z)\lambda(\eta-z)m(\eta-z)\l[\frac{1}{2}z^2-zh_1(\eta-z)\r]dz
+a(\eta)m(\eta)\\
&\quad+2(a(\eta)m(\eta)h_1(\eta))'-b(\eta)m(\eta)h_1(\eta).
\end{split}
\end{equation}


Similar to the equality (\ref{13}). In order to ensure the uniqueness of the function $h_2$, we see that  $Q$ is determined from the following solvability condition for equation (\ref{13})
\begin{equation}\label{08908}
\begin{split}
Q&=\int_{\mathbb{T}}\int_{\mathbb{R}}c(z)\lambda(\eta-z)m(\eta-z)\l[\frac{1}{2}z^2-zh_1(\eta-z)\r]dzd\eta\\
&\quad +\int_{\mathbb{T}}a(\eta)m(\eta)d\eta-\int_{\mathbb{T}}b(\eta)m(\eta)h_1(\eta)d\eta.
\end{split}
\end{equation}

Next, let's simplify the expression of $Q.$ A short calculation revealed that
\[
\begin{split}
&\int_{\mathbb{T}}\int_{\mathbb{R}}c(z)\lambda(\eta-z)m(\eta-z)zh_1(\eta-z)dzd\eta\\
&=\int_{\mathbb{T}}\int_{\mathbb{R}}(\eta-q)c(\eta-q)\lambda(q)m(q)h_1(q)dqd\eta\\
&=\int_{\mathbb{T}}\int_{\mathbb{R}}c(q-\eta
)(q-\eta)\lambda(\eta)m(\eta)h_1(\eta)dqd\eta\\
&=\int_{\mathbb{T}}\l[\int_{\mathbb{R}}c(\eta-q)(q-\eta)dq\r]\lambda(\eta)m(\eta)h_1(\eta)d\eta\\
&=0
\end{split}
\]
For the third term of $Q,$
\[
\begin{split}
\int_{\mathbb{T}}-b^m(\eta)h_1&(\eta)d\eta=\int_{\mathbb{T}}\tilde{T}_m\chi(\eta)h_1(\eta)d\eta=\int_{\mathbb{T}}\chi(\eta)(\tilde{T}_m)^*h_1(\eta)d\eta\\
&=\int_{\mathbb{T}}\chi(\eta)\l[\int_{\mathbb{R}}zc(z)m(\eta-z)\lambda(\eta-z)dz+b^m(\eta)-2(a^m(\eta))'\r]d\eta,
\end{split}
\]
and
\[
\begin{split}
\int_{\mathbb{T}}\chi&(\eta)b^m(\eta)d\eta=-\int_{\mathbb{T}}\chi(\eta)\tilde{T}_m\chi(\eta)d\eta\\
&=\int_{\mathbb{T}}a^m(\eta)(\chi'(\eta))^2d\eta+\frac{1}{2}\int_{\mathbb{T}}\int_{\mathbb{R}}\lambda^m(\eta)c(z)\l[\chi(\eta-z)-\chi(\eta)\r]^2dzd\eta\\
&=\int_{\mathbb{T}}a^m(\eta)(\chi'(\eta))^2d\eta+\frac{1}{2}\int_{\mathbb{T}}\int_{\mathbb{R}}\lambda^m(q)c(\eta-q)\l[\chi(q)-\chi(\eta)\r]^2dqd\eta,\\
\end{split}
\]
we conclude  that
\begin{small}

  \begin{equation*}
    Q\!=\!\int_{\mathbb{T}}a(\eta)m(\eta)(\chi'(\eta)\!+\!1)^2d\eta\!+\!\frac{1}{2}\int_{\mathbb{T}}\int_{\mathbb{R}}c(\eta\!-\!q)\lambda(q)m(q)\l[(\eta\!-\!q)\!+\!(\chi (\eta)\!-\!\chi (q))\r]^2d\eta dq.
  \end{equation*}
\end{small}

Our last step is to show that $\l\|\phi_\varepsilon(x)\r\|_0$ is vanishing as $\epsilon\rightarrow 0.$

Choose the term of order $\epsilon^0$ of $\phi_\epsilon(x)$,  and denote it  by $\phi_{\epsilon}^{(1)}(x)$. For an arbitrary positive constant $M$,  we infer that
  \begin{small}
  \[
\begin{split}
\phi_{\epsilon}^{(1)}(x)&\!=\!\frac{1}{\epsilon^2}\l[\int\limits_{\l\{|z|\!\leq \!M\cup|z|\!>\!M\r\}}c(z)\epsilon^2\lambda(\frac{x}{\epsilon}-z)m(\frac{x}{\epsilon}-z)
\int_0^1(
\xi''\l(x-\epsilon zt)-\xi''(x)\r)z^2(1-t)dt\r]dz\\
&:=\phi_{\epsilon}^{(2)}(x)+\phi_{\epsilon}^{(3)}(x).
\end{split}
\]
\end{small}
Then
$$\l\|\phi_{\epsilon}^{(2)}\r\|_0\leq\frac{C_2}{2\delta}\sup_{|z|\leq M}\l\|\xi''(x-\epsilon zt)-\xi''(x)\r\|_0\int_{\mathbb{R}}z^2c(z)dz,$$
$$\|\phi_{\epsilon}^{(3)}\|_0\leq \frac{2C_2}{\delta}\l\|\xi''(x)\r\|_0
\int_{|z|>M}z^2c(z)dz.$$
If we take $M=\frac{1}{\sqrt{\epsilon}}$, then
$$\l\|\phi_{\epsilon}^{(2)}\r\|_0\rightarrow 0 \quad and \quad  \l\|\phi_{\epsilon}^{(3)}\r\|_0\rightarrow 0, \quad as\quad \epsilon\rightarrow 0.$$
This implies that $$\l\|\phi_{\epsilon}^{(1)}\r\|_0\rightarrow 0, \quad \epsilon\rightarrow 0.$$
 For the second term of $\phi_\epsilon(x),$
 $$\phi_{\epsilon}^{(4)}(x)=\epsilon \int_{\mathbb{R}}c(z)h_1\l(\frac{x}{\epsilon}-z\r)\l[\int_0^1 \xi'''(x-\epsilon zt)z^2(1-t)dt\r]dz,$$
 we have
\begin{equation}\label{9}
 \l\|\phi_{\epsilon}^{(4)}(x)\r\|_0\leq \frac{\epsilon C_2}{2\delta}\sup_{z,q\in\mathbb{R}}\l\|h_1\l(\frac{x}{\epsilon}-z\r)
 \xi'''(x-\epsilon z+q)\r\|_0\int_{\mathbb{R}}z^2c(z)dz.
\end{equation}
Next, we estimate $$\sup_{z,q\in\mathbb{R}}\l\|h_1\l(\frac{x}{\epsilon}-z\r)
 \xi'''(x-\epsilon z+q)\r\|_0.$$
Taking $y=x-\epsilon z$, we deduce that
\[
\begin{split}
\sup_{q\in\mathbb{R}}\l\|h_1\l(\frac{y}{\epsilon}\r)
 \xi'''(y+q)\r\|_0&=\sup_{q\in\epsilon\mathbb{T}}\l\|h_1\l(\frac{y}{\epsilon}\r)
 \xi'''(y+q)\r\|_0\\
 &\leq\sup_{q\in\epsilon\mathbb{T}}
\displaystyle\sum_{k\in\mathbb{Z}}\int_{\epsilon k}^{\epsilon k+\epsilon}h_1\l(\frac{y}{\epsilon}\r)^2
[\xi'''(y+q)]^2dy\\
&\leq \l\|h_1\r\|^2_0\sum_{k\in\mathbb{Z}}\max_{y\in[\epsilon k,\epsilon k+\epsilon],q\in \epsilon\mathbb{T}}[\xi'''(y+q)]^2dy\\
&\rightarrow \l\|h_1\r\|^2_0\|\xi'''\|^2_0,
\end{split}
\]
as $\epsilon\rightarrow 0$.
Thus from (\ref{9}), it follows that $\l\|\phi_{\epsilon}^{(4)}\r\|_0\rightarrow 0$, as $\epsilon\rightarrow 0.$

Similarly,  for the third term, we have
\[
\begin{split}
\l\|\phi_{\epsilon}^{(5)}(x)\r\|_0&=\left\|\epsilon\int_{\mathbb{R}}dz c(z) h_2(\frac{x}{\epsilon}-z)\int_0^1\xi'''(x-\epsilon zt)zdt\right\|_0\\
&\rightarrow 0.
\end{split}
\]
In summary, we have $\l\|\phi_\epsilon(x)\r\|_0 \rightarrow 0,$ as $\epsilon\rightarrow 0.$
\\ \hspace*{\fill} \\
\end{proof}

\section*{Appendix D: Convergence of $(V^\epsilon)^*\xi^\epsilon$}
Here we will show the convergence of $(V^\epsilon)^*\xi^\epsilon,$ as $\epsilon$ goes to $0.$

First, Let's do a simple calculation for fractional Laplace operator. For every functions $f,g,\psi\in H^{\alpha/2}$,
 \begin{small}
 \begin{equation*}
 \begin{split}
   \big\langle(-&\Delta)^{\alpha/2}(f\cdot g)(x),\psi(x)\big\rangle=\int_{\mathbb{R}}\int_{\mathbb{R}}\big(f(x)g(x)-f(y)g(y)\big)\psi(x)\gamma^2(x,y)dxdy\\
   &=\frac{1}{2}\int_{\mathbb{R}}\int_{\mathbb{R}}\big(f(x)g(x)-f(y)g(y)\big)\big(\psi(x)-\psi(y)\big)\gamma^2(x,y)dxdy\\
   &=\frac{1}{2}\big(\mathcal{D^{*}}(fg)(x,y),\mathcal{D^{*}}\psi(x,y)\big)_{L^2{(\mathbb{R}\times\mathbb{R})}}\\
   &=\small{\frac{1}{2}\int_{\mathbb{R}}\int_{\mathbb{R}}\big[(f(x)-f(y)g(x)+f(x)(g(x)-g(y))\big](\psi(x)-\psi(y))\gamma^2(x,y)dxdy}\\
   &=\frac{1}{2}\Big(\mathcal{D^{*}}(f)(x,y)g(x)+\mathcal{D^{*}}(g)(x,y)f(y),\mathcal{D^{*}}\psi(x,y)\Big)_{L^2{(\mathbb{R}\times\mathbb{R})}}.\\
 \end{split}
 \end{equation*}
 \end{small}

  For the operator $L^\epsilon,$ we have
  \begin{small}
  \begin{equation*}
 \begin{split}
 &\big\langle (L^\epsilon)^*\xi^\epsilon,\psi\big\rangle=\bigg(-(-\Delta)^{\alpha/2}(\delta_1^\epsilon\xi^\epsilon)(x),\psi(x)\bigg)
 -\frac{1}{\epsilon^{\alpha-1}}\bigg((p^\epsilon(x)\xi^\epsilon(x))',\psi(x)\bigg)\\
 &=-\frac{1}{2}\bigg(\mathcal{D^{*}}(\delta_1^\epsilon\xi^\epsilon)(x,y),\mathcal{D^{*}}\psi(x,y)\bigg)_{L^2{(\mathbb{R}\times\mathbb{R})}}
 -\frac{1}{\epsilon^{\alpha-1}}\bigg((p^\epsilon(x)\xi^\epsilon(x))',\psi(x)\bigg)\\
 &=-\frac{1}{2}\bigg(\mathcal{D^{*}}(\delta_1^{m_1,\epsilon}\xi)(x,y),\mathcal{D^{*}}\psi(x,y)\bigg)_{L^2{(\mathbb{R}\times\mathbb{R})}}
 -\frac{\epsilon}{2}\bigg(\mathcal{D^{*}}(\delta_1^{m_1,\epsilon} h_{3}^\epsilon\xi^{'})(x,y),\mathcal{D^{*}}\psi(x,y)\bigg)_{L^2{(\mathbb{R}\times\mathbb{R})}}\\
 &\quad-\bigg(\epsilon^{-\alpha}(pm_1)'\l(\frac{x}{\epsilon}\r)+\epsilon^{1-\alpha}p^{m_1,\epsilon}(x)\xi'(x)+\epsilon^{2-\alpha}\Big(p^{m_1,\epsilon}(x)h_3^{\epsilon}(x)\Big)'\xi'(x),\psi(x)\bigg)\\
 &\quad-\bigg(\epsilon^{2-\alpha}p^{m_1,\epsilon}(x)h_3^{\epsilon}(x)\xi''(x),\psi(x)\bigg)\\
 &:=G_1+G_2-\bigg(\epsilon^{-\alpha}(pm_1)'\l(\frac{x}{\epsilon}\r)\xi(x)+\epsilon^{1-\alpha}p^{m_1,\epsilon}(x)\xi'(x)+\epsilon^{1-\alpha}(pm_1h_3)'\l(\frac{x}{\epsilon}\r)\xi'(x),\psi(x)\bigg)\\
 &\quad-\bigg(\phi^1_{\epsilon},\psi(x)\bigg),
  \end{split}
 \end{equation*}
 \end{small}
 where $(\phi^1_{\epsilon},\psi(x))\rightarrow 0,$ as $\epsilon$ goes to $0.$ Furthermore, we can show that,
  \begin{small}
  \begin{equation*}
 \begin{split}
 G_1&=-\frac{1}{2}\bigg(\mathcal{D^{*}}(\delta_1^{m_1,\epsilon}\xi)(x,y),\mathcal{D^{*}}\psi(x,y)\bigg)_{L^2{(\mathbb{R}\times\mathbb{R})}}\\
 &=-\frac{1}{2}\bigg(\mathcal{D^{*}}(\delta_1^{m_1,\epsilon})(x,y)\xi(x)+\mathcal{D^{*}}(\xi)(x,y)\delta_1^{m_1,\epsilon}(y),\mathcal{D^{*}}\psi(x,y)\bigg)_{L^2{
 (\mathbb{R}\times\mathbb{R})}}\\
 &=-\frac{1}{2}\bigg(\mathcal{D^{*}}(\delta_1^{m_1,\epsilon})(x,y),\mathcal{D^{*}}(\psi\xi)(x,y)-\mathcal{D^{*}}(\xi)(x,y)\psi(y)\bigg)_{L^2{(\mathbb{R}\times\mathbb{R})}}\\
 &-\frac{1}{2}\bigg(\mathcal{D^{*}}(\xi)(x,y),\delta_1^{m_1,\epsilon}(y)\mathcal{D^{*}}\psi(x,y)\bigg)_{L^2{(\mathbb{R}\times\mathbb{R})}}
 =\bigg(-(-\Delta)^{\alpha/2}\delta_1^{m_1,\epsilon}(x),\psi(x)\xi(x)\bigg)\\
 &-\frac{1}{2}\bigg(\mathcal{D^{*}}(\xi)(x,y),
 \delta_1^{m_1,\epsilon}(y)\mathcal{D^{*}}\psi(x,y)-\mathcal{D^{*}}\delta_1^{m_1,\epsilon}(x,y)\psi(y)\bigg)_{L^2{(\mathbb{R}\times\mathbb{R})}}\\
 &:=I_1+I_2.
  \end{split}
 \end{equation*}
 \end{small}
 For $I_2,$ we deduce that,
   \begin{equation*}
 \begin{split}
   I_2&=-\frac{1}{2}\bigg(\mathcal{D^{*}}(\xi)(x,y),\delta_1^{m_1,\epsilon}(y)\mathcal{D^{*}}\psi(x,y)-\mathcal{D^{*}}\delta_1^{m_1,\epsilon}(x,y)\psi(y)\bigg)
   _{L^2{(\mathbb{R}\times\mathbb{R})}}\\
   &=-\frac{1}{2}\bigg(\mathcal{D^{*}}(\xi)(x,y),\mathcal{D^{*}}(\delta_1^{m_1,\epsilon}\psi)(x,y)-\mathcal{D^{*}}\delta_1^{m_1,\epsilon}(x,y)\l[\psi(x)+\psi(y)\r]\bigg)
    _{L^2{(\mathbb{R}\times\mathbb{R})}}\\
   &:=\bigg(-(-\Delta)^{\alpha/2}\xi(x),\delta_1^{m_1,\epsilon}(x)\psi(x)\bigg)+I_3,\\
   \end{split}
 \end{equation*}
 where we have $I_3\rightarrow 0$. In fact,
  \begin{equation*}
 \begin{split}
   I_3&=\frac{1}{2}\bigg(\mathcal{D^{*}}(\xi)(x,y),\mathcal{D^{*}}\delta_1^{m_1,\epsilon}(x,y)\l[\psi(x)+\psi(y)\r]\bigg)_{L^2{(\mathbb{R}\times\mathbb{R})}}\\
   &=\int_{\mathbb{R}}\int_{\mathbb{R}}\l[\xi(x)-\xi(y)\r]\delta_1^{m_1,\epsilon}(x)\l[\psi(x)+\psi(y)\r]\gamma^2(x,y)dxdy\\
   &=\bigg(\int_{\mathbb{R}}\l[\xi(x)-\xi(y)\r]\l[\psi(x)+\psi(y)\r]\gamma^2(x,y)dy,\delta_1^{m_1,\epsilon}(x)\bigg)
   \rightarrow 0.
     \end{split}
 \end{equation*}
 From the calculation above, we can obtain that,
 \begin{equation*}
   \begin{split}
   G_2=\epsilon\bigg(-(-\Delta)^{\alpha/2}(\delta_1^{m_1,\epsilon} h_3^\epsilon)(x),\psi(x)\xi'(x)\bigg)-\bigg(\phi^2_\epsilon,\psi(x)\xi'(x)\bigg),
   \end{split}
 \end{equation*}
 where $\bigg(\phi_2^\epsilon,\psi(x)\xi'(x)\bigg)\rightarrow 0,$ as $\epsilon$ goes to $0.$
 Then, we have,
 \begin{small}
 \begin{equation*}
   \begin{split}
   \big\langle&(L^\epsilon)^*\xi^\epsilon,\psi\big\rangle=\bigg(-(-\Delta)^{\alpha/2}\delta_1^{m_1,\epsilon}(x)-\epsilon^{-\alpha}(pm_1)'\l(\frac{x}{\epsilon}\r),\psi(x)\xi(x)\bigg)\\
    &+\bigg(-\epsilon(-\Delta)^{\alpha/2}(\delta_1^{m_1,\epsilon} h_3^\epsilon)(x)-\epsilon^{1-\alpha}p^{m_1,\epsilon}(x)-\epsilon^{1-\alpha}(pm_1h_3)'\l(\frac{x}{\epsilon}\r),\psi(x)\xi'(x)\bigg)\\
   &+\bigg(-(-\Delta)^{\alpha/2}\xi(x),
   \delta_1^{m_1,\epsilon}(x)\psi(x)\bigg)+\phi_\epsilon^3\\
   \end{split}
 \end{equation*}
 \end{small}
 where $\phi_\epsilon^3$ goes to $0.$
 Using the equations (\ref{25}), (\ref{38}), as $\epsilon\rightarrow 0$, we have
 $$\l\langle(L^\epsilon)^*\xi^\epsilon,\psi\r\rangle\rightarrow \l(\int_\mathbb{T}\delta^\alpha(\eta)m_1(\eta)d\eta\r)\cdot\bigg(-(-\Delta)^{\alpha/2}\xi(x),\psi(x)\bigg). $$

From the fact that $$\l\langle F^\epsilon)^*\xi^\epsilon,\psi\r\rangle\rightarrow\l(-\xi'(x)\int_{\mathbb{T}}g(\eta)m_1(\eta)d\eta+\xi(x)\int_{\mathbb{T}}f(\eta)m_1(\eta)d\eta,\psi(x)\r)$$

 We can infer that
 \[
 \begin{split}
 \l\langle (V^\epsilon)^*\xi^\epsilon,\psi\r\rangle\rightarrow& \bigg(\int_{\mathbb{T}}\delta^\alpha(\eta)m_1(\eta)d\eta\cdot\Big(-(-\Delta)^{\alpha/2}\Big)\xi(x)+\xi'(x)\int_{\mathbb{T}}g(\eta)m_1(\eta)d\eta\\
 &+\xi(x)\int_{\mathbb{T}}f(\eta)m_1(\eta)d\eta,\psi(x)\bigg).
 \end{split}
 \]
\end{appendices}

%
%



\end{document}